\documentclass[12pt]{article}
\usepackage{latexsym}
\usepackage{amssymb}
\usepackage{amsmath}
\usepackage{amsthm}
\usepackage{mathabx}
\usepackage{rotating}
\usepackage{multirow}
\usepackage{mathrsfs}
\usepackage{wasysym}
\usepackage{enumerate}
\usepackage{nicefrac}
\usepackage{rawfonts}
\input{prepictex}
\input{pictex}
\input{postpictex}
\DeclareMathAlphabet{\zap}{OT1}{pzc}{m}{it}
\def\CC{\mathbb C}

\newtheorem{main}{Theorem}

\newtheorem{thm}{Theorem}[section]
\newtheorem*{them}{Theorem}
\newtheorem{lem}[thm]{Lemma}
\newtheorem{prop}[thm]{Proposition}
\newtheorem{cor}[thm]{Corollary}
\def\ro{\mathring{r}}
\newenvironment{xpl}{\mbox{ }\\ {\bf  Example.}\mbox{ }}{
\hfill $\diamondsuit$\mbox{}\bigskip}

\def\RR{{\mathbb R}}
\def\CP{{\mathbb C \mathbb P}}

\DeclareMathOperator{\vol}{Vol}

\begin{document}

\title{Curvature in the Balance:\\
The  Weyl Functional  and Scalar\\ Curvature
of $4$-Manifolds
}

\author{Claude LeBrun\thanks{Supported 
in part by  NSF grant DMS-1906267}\\ 
Stony
 Brook
 University} 
  
\date{}

\maketitle

\hspace{1.8in}
\begin{minipage}{3.5in}
\begin{quote} {\em Dedicated to  my friend, mentor, and esteemed colleague
 Blaine Lawson,  on the occasion  of
his eightieth birthday. }
\end{quote}
\end{minipage}

\bigskip

\begin{abstract}The infimum of the Weyl functional  is shown to  be surprisingly small  on  many compact 
 $4$-manifolds that admit
positive-scalar-curvature metrics. Results are also proved that systematically compare  the
 scalar and self-dual Weyl curvatures of certain  
almost-K\"ahler $4$-manifolds.  \end{abstract}

 The curvature tensor of an  oriented Riemannian $4$-manifold $(M^4,g)$ may be 
 invariantly decomposed into exactly four independent pieces
 $$\mathcal{R} = s\oplus \mathring{r} \oplus W_+\oplus W_-,$$
 where $s$ is the scalar curvature, where $\mathring{r}$ is the trace-free Ricci curvature, 
 and where $W_\pm$ are respectively the self-dual and anti-self-dual Weyl curvatures. 
This happens because   the curvature tensor $\mathcal{R}$ at a point may naturally be thought of as  an element 
 of $\odot^2\Lambda^2 \cap (\Lambda^4)^\perp$, and decomposing 
 this vector space into irreducible   $\mathbf{SO}(n)$-modules splits it into exactly four factors when $n=4$. 
 Four dimensions is completely anomalous in this respect; by contrast, the curvature
consists of  just three invariant pieces when $n> 4$, of  just  two pieces when $n=3$, 
and of only  a single piece when $n=2$. 

Assuming henceforth that $M^4$ is compact  and without boundary, we now obtain four
basic quadratic curvature functionals on the space of Riemannian metrics  on $M$
by taking the $L^2$-norm-squared of each of our  curvature pieces 
\begin{eqnarray*}
g&\longmapsto&\int_M s^2d\mu_g\\
g&\longmapsto&\int_M |\mathring{r}|^2\, d\mu_g\\
g&\longmapsto&\int_M |W_+|^2d\mu_g\\
g&\longmapsto&\int_M |W_-|^2d\mu_g\\
\end{eqnarray*}
and any other quadratic curvature functional is
then  a linear combination of these four. 
Each of these functionals is invariant under {constant} rescalings $g\rightsquigarrow c g$, for $c\in \RR^+$,  
and the last  two  functionals are actually  {\em conformally invariant}, in the sense that they 
are unaltered by arbitrary conformal rescalings $g\rightsquigarrow u g$, where $u: M \to \RR^+$
is a smooth positive function. 

On the other hand, these four functionals are not genuinely independent, because the
$4$-dimensional 
Gauss-Bonnet formula
\begin{equation}
\label{gb}
\chi (M) = \frac{1}{8\pi^2} \int_M \left( \frac{s^2}{24} + |W_+|^2 + |W_-|^2 - \frac{|\mathring{r}|^2}{2} \right) d\mu_g
\end{equation}
and Thom-Hirzebruch signature formula 
\begin{equation}
\label{th}
\tau (M) = \frac{1}{12\pi^2} \int_M \left(  |W_+|^2  -|W_-|^2  \right) d\mu_g
\end{equation}
express two important homotopy invariants of the compact oriented $4$-manifold $M$ as linear combinations
of these  basic  curvature functionals. For metrics on a  fixed oriented $4$-manifold $M$, the two functionals
\begin{equation}
\label{basis}
g\longmapsto\int_M \frac{s^2}{24} d\mu_g , \qquad
g\longmapsto\int_M |W_+|^2d\mu_g , 
\end{equation}
therefore completely determine every  other quadratic curvature functional. 

The main theme of this article concerns a {\sf question of balance}: how do the two functionals \eqref{basis} 
compare in size, for  specific types  of metrics on interesting  classes $4$-manifolds? 

One  source of motivation for this question stems from the K\"ahler case.
Suppose that $g$ is a K\"ahler metric on $(M,J)$, and that $M$ is given the orientation 
 determined by the complex-structure
tensor $J$. We then have the point-wise identity
$$|W_+|^2 = \frac{s^2}{24}, $$
and  our two basic functionals  \eqref{basis} therefore  coincide on K\"ahler metrics. 

Einstein metrics provide a particularly compelling context for this issue. Recall \cite{bes} that
a Riemannian metric $g$ is said to be {\sf Einstein} if its Ricci tensor satisfies $r = \lambda g$ for 
some constant $\lambda$. In any dimension $n > 2$, this is equivalent to requiring  the
trace-free Ricci tensor $\mathring{r} = r -\frac{s}{n}g$ to vanish, and the scalar curvature $s$ of such a metric then coincides
with $n$ times the Einstein constant $\lambda$. When $n=4$, our balance question turns out to be 
highly relevant to the study of Einstein metrics, but the direction  in which the balance tips  critically  depends
on the sign of the Einstein constant.
 For example,  when the scalar curvature is positive, 
the self-dual Weyl curvature almost always  outweighs the scalar curvature \cite{G1,gl2}: 

\begin{them}[Gursky] Let $(M^4,g)$ be a compact oriented Einstein $4$-manifold 
with $s> 0$ that is not an irreducible symmetric space. Then 
$$\int_M |W_+|^2 d\mu_g  \geq \int_M  \frac{s^2}{24}d\mu_g,$$
with equality iff $g$ is a locally K\"ahler-Einstein metric. 
\end{them}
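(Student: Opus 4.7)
The plan is to couple the Einstein condition to the self-dual Weyl curvature via a Weitzenb\"ock identity and a refined Kato inequality, producing a Yamabe-type subsolution for a suitable power of $|W_+|$; Obata's theorem on the Yamabe minimality of positive Einstein metrics then collapses this to the desired integral inequality.

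First I would exploit the Bianchi identity: applied to the decomposition $\mathcal{R} = s \oplus \ro \oplus W_+ \oplus W_-$, it together with $\ro = 0$ and the constancy of $s$ on an Einstein manifold forces $\delta W_+ = 0$. This yields a Derdzinski-type Weitzenb\"ock formula
$$
\nabla^*\nabla W_+ = \tfrac{s}{2}W_+ - 6\,\mathring{(W_+^{2})},
$$
where $\mathring{(W_+^{2})}$ is the trace-free part of the composition $W_+\circ W_+$ regarded as an endomorphism of the rank-three bundle $\Lambda^+$. Pairing with $W_+$ and invoking $\mathrm{tr}(W_+^{3}) = 3\det W_+$ (which holds because $W_+$ is trace-free symmetric on $\Lambda^+$) then produces the pointwise Bochner identity
$$
\tfrac{1}{2}\Delta|W_+|^2 = |\nabla W_+|^2 - \tfrac{s}{2}|W_+|^2 + 18\,\det W_+ ,
$$
where $\det W_+$ denotes the determinant of $W_+$ as a $3\times 3$ operator on $\Lambda^+$.

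Next I would invoke two sharpening inequalities. A refined Kato inequality
$$
|\nabla W_+|^2 \geq \tfrac{5}{3}\,\bigl|\nabla|W_+|\bigr|^2
$$
holds pointwise under $\delta W_+ = 0$, by a Stein--Weiss argument exploiting the $\mathbf{SO}(4)$-splitting of the target bundle of $\nabla W_+$. An algebraic bound
$$
|\det W_+| \leq \frac{|W_+|^3}{3\sqrt{6}} ,
$$
with equality iff two of the three eigenvalues of $W_+$ coincide, follows from eigenvalue extremization under $\sum\lambda_i=0$. Substituting both into the pointwise Bochner and setting $u = |W_+|^{1/3}$ on the open set $\{W_+ \neq 0\}$ should yield a Yamabe-type subsolution
$$
L_g u \leq 2\sqrt{6}\,u^4, \qquad L_g := -6\Delta + s .
$$
By Obata's theorem, an Einstein metric with $s>0$ realizes the Yamabe constant $\mathcal{Y}([g]) = sV^{1/2}$ in its conformal class unless $(M,g)$ is the round $S^4$, and that case is excluded by the hypothesis that $(M,g)$ is not an irreducible symmetric space. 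Hence $\int\phi L_g\phi\,d\mu \geq sV^{1/2}(\int\phi^4\,d\mu)^{1/2}$ for every smooth $\phi > 0$, and testing with $\phi = u$, combining with the subsolution inequality, and applying the Cauchy--Schwarz estimate $\int u^5 \leq (\int u^4)^{1/2}(\int u^6)^{1/2}$ should collapse the chain to $s^2V \leq 24\int u^6\,d\mu = 24\int|W_+|^2\,d\mu$, which is the claim.

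For the equality case, I would trace backward: Cauchy--Schwarz equality forces $|W_+|$ to be constant, refined Kato equality forces $\nabla W_+ = 0$, and algebraic equality forces $W_+$ to have two coincident eigenvalues. The one-dimensional eigenline of $W_+$ corresponding to its simple eigenvalue is then a parallel line in $\Lambda^+$, which furnishes a parallel self-dual 2-form; in four dimensions such a parallel 2-form is (up to scale) the K\"ahler form of a parallel almost-complex structure $J$, so that $(M,g,J)$ is locally K\"ahler--Einstein. The principal technical hurdle is the extraction of the sharp subsolution inequality $L_g u \leq 2\sqrt{6}\,u^4$: the exponent $1/3$, the refined Kato constant $5/3$, and the algebraic constant $1/(3\sqrt{6})$ must interlock so that the inequality is saturated at the K\"ahler--Einstein eigenvalue pattern $(s/6,-s/12,-s/12)$, whence the sharp constant $24$ in the final integral bound is genuinely attained on K\"ahler--Einstein metrics.
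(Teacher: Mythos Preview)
Your proposal follows exactly the route the paper sketches: on an Einstein $4$-manifold the second Bianchi identity gives $\delta W_+=0$, one invokes the Weitzenb\"ock formula for $W_+$, sharpens the ordinary Kato inequality to the refined constant $5/3$, bounds $\det W_+$ algebraically, and packages everything as a Yamabe-type subsolution for $u=|W_+|^{1/3}$; comparison with the Yamabe minimum (achieved by the Einstein metric via Obata) then yields the integral inequality. This is precisely Gursky's ``clever conformal-rescaling argument.''

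There is, however, one logical misstep in how you invoke the hypothesis. Obata's theorem says that \emph{every} positive-scalar-curvature Einstein metric---the round $S^4$ included---realizes its Yamabe constant; the sphere is exceptional only for \emph{uniqueness} of the minimizer, which your argument never uses. The hypothesis that $(M,g)$ is not an irreducible symmetric space enters elsewhere, and the paper says so explicitly: by Hitchin's classification \cite[Theorem~13.30]{bes} of half-conformally-flat Einstein $4$-manifolds, the conditions $s>0$ and $W_+\equiv 0$ force $(M,g)$ to be $S^4$ or $\overline{\CP}_2$, both irreducible symmetric. Excluding these is what guarantees $W_+\not\equiv 0$; without that, your test function $u$ is identically zero and the Yamabe comparison degenerates to $0\geq 0$. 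You should also say a word about why the pointwise subsolution on $\{W_+\neq 0\}$ can legitimately be integrated against $u$ over all of $M$---either by regularizing $u$, or by noting that the differential inequality itself, via a strong-maximum-principle argument, forces $W_+$ to be nowhere zero once $W_+\not\equiv 0$. With these two repairs your argument is complete and coincides with the paper's.
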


By contrast, in the negative-scalar-curvature setting, there are large classes of $4$-manifolds where the balance tips in the opposite direction
\cite{lmo,lebcam}:

\begin{them}[L]  Let $M$ be a smooth compact $4$-manifold that admits a symplectic form,
but does not admit an Einstein metric with $s> 0$. Then, with respect to the symplectic orientation, 
any Einstein metric $g$
on $M$ satisfies
$$ \int_M  \frac{s^2}{24}d\mu_g \geq \int_M |W_+|^2 d\mu_g ,$$
with equality iff $g$ is a K\"ahler-Einstein metric. 
\end{them}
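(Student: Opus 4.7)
The plan is to deploy Seiberg--Witten theory. By a theorem of Taubes, any symplectic $4$-manifold $(M,\omega)$ carries a distinguished $\mathrm{Spin}^c$ structure $\mathfrak{s}_\omega$, determined by any $\omega$-compatible almost-complex structure, whose Seiberg--Witten invariant is $\pm 1$ when $b_+(M) \geq 2$; its first Chern class $c = c_1(\mathfrak{s}_\omega)$ satisfies the Noether-type identity $c \cdot c = 2\chi(M) + 3\tau(M)$. Thus $c$ is a monopole class, and the Seiberg--Witten curvature estimate yields, for every Riemannian metric $g$ on $M$,
\[
\int_M s^2 \, d\mu_g \;\geq\; 32\pi^2\, (c^+)^2,
\]
where $c^+$ is the self-dual part of the $g$-harmonic representative of $c$. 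Because the intersection form is negative on anti-self-dual harmonic forms, one also has the elementary Hodge-theoretic bound $(c^+)^2 \geq c \cdot c$.

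Now suppose $g$ is Einstein, so $\mathring{r} \equiv 0$. Combining twice \eqref{gb} with three times \eqref{th} yields the identity
\[
4\pi^2 \bigl( 2\chi(M) + 3\tau(M) \bigr) \;=\; \int_M \left( \frac{s^2}{24} + 2\, |W_+|^2 \right) d\mu_g ,
\]
and chaining the three displayed relations gives
\[
\int_M s^2 \, d\mu_g \;\geq\; 32\pi^2 (c^+)^2 \;\geq\; 32\pi^2\, c \cdot c \;=\; 8 \int_M \left( \frac{s^2}{24} + 2\, |W_+|^2 \right) d\mu_g ,
\]
which rearranges to the asserted inequality $\int_M (s^2/24)\, d\mu_g \geq \int_M |W_+|^2 \, d\mu_g$.

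For the equality clause, saturation must propagate through each step. The identity $(c^+)^2 = c \cdot c$ forces the harmonic representative of $c$ to be pure self-dual, while the rigidity case of the Seiberg--Witten curvature estimate is well known to force $g$ to be K\"ahler of non-positive constant scalar curvature, with $c$ equal to the first Chern class of the K\"ahler structure. Together with the Einstein assumption this pins $g$ down as a K\"ahler--Einstein metric; and since the SW estimate was set up in the symplectic orientation, the complex structure produced automatically induces that same orientation. Conversely, any K\"ahler metric satisfies the pointwise identity $|W_+|^2 = s^2/24$ already noted in the introduction, so equality is realized by K\"ahler--Einstein metrics.

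The main obstacle is the case $b_+(M) = 1$, where the Seiberg--Witten invariant depends on a choice of chamber and Taubes' theorem must be replaced by its wall-crossing refinement. This is precisely where the hypothesis that $M$ admits no positive-scalar-curvature Einstein metric is doing substantive work: together with standard symplectic geography in dimension four, it restricts the topology of $M$ enough to guarantee that $\mathfrak{s}_\omega$ has non-trivial SW invariant in the chamber naturally selected by any candidate Einstein metric, so that the monopole-class conclusion and hence the entire argument above go through unchanged.
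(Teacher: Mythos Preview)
Your proposal is correct and follows the same approach as the paper's parenthetical sketch: Taubes' nonvanishing result for the Seiberg--Witten invariant of a symplectic $4$-manifold, the curvature estimate $\int_M s^2\, d\mu \geq 32\pi^2 (c^+)^2$ coming from the Weitzenb\"ock formula for the Seiberg--Witten equations, and the Einstein Gauss--Bonnet/signature identity $4\pi^2(2\chi+3\tau)=\int_M(\tfrac{s^2}{24}+2|W_+|^2)\,d\mu$. The paper phrases the role of the hypothesis as first yielding $c_1\cdot[\omega]\leq 0$ and $c_1^2\geq 0$, after which Taubes' result supplies solutions of the \emph{unperturbed} Seiberg--Witten equations for every metric; this is exactly the content you gesture at in your final paragraph on the $b_+=1$ subtlety, though your phrasing in terms of ``the chamber naturally selected by any candidate Einstein metric'' is somewhat looser than what is actually needed (existence of an unperturbed solution, not merely a nonzero chamber-dependent invariant).
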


(Here the  assumption that  the symplectic manifold  $M$ admits an Einstein metric, but does admit
an Einstein metric of positive scalar curvature, guarantees that the symplectic form $\omega$ satisfies
$c_1 \cdot [\omega ] \leq 0$  and  $c_1^2 \geq 0$. A result of Taubes \cite{taubes} then implies that, for the spin$^c$ structure
determined by the symplectic form, 
the unperturbed  Seiberg-Witten equations admit  a solution for every metric, and the desired inequality
then follows from the Weitzenb\"ock formula for the Seiberg-Witten equations. By contrast, the assumption 
in  Gursky's theorem
that  $(M,g)$ is not an irreducible symmetric space    implies, 
by  a result  of Hitchin \cite[Theorem 13.30]{bes}, that $W_+\nequiv 0$; the theorem  is then deduced,  using 
 a clever conformal-rescaling argument, 
from a Weitzenb\"ock formula for $W_+$ that holds for any Einstein $4$-manifold, since the second Bianchi 
identity on such a space implies that    $\delta W_+=0$.) 

\bigskip

Given these  results about the Einstein case, it might therefore seem tempting  to ask  
about the balance between our two basic functionals \eqref{basis} for arbitrary Riemannian metrics on a smooth compact
oriented $4$-manifold. However, this na\"{\i}ve form of the question is just  silly, 
because because $\int |W_+|^2 d\mu$ is conformally invariant, while $\int s^2 d\mu$ varies wildly in any conformal class!

\begin{xpl} Let $(M,g)$ be a compact oriented Riemannian $4$-manifold, and consider arbitrary conformal rescalings
$\hat{g} = u^2 g$, where $u: M \to \RR^+$ is a smooth positive function. The {\sf Yamabe functional} of such a  conformally rescaled metric
is then given by
$$
\mathscr{E}( \hat{g}) := \frac{\int_M s_{\hat{g}}d\mu_{\hat{g}}}{\sqrt{\int_M d\mu_{\hat{g}}}}= 
\frac{\int_M \left[ 6|\nabla u|^2 + su^2 \right] d\mu_g}{\sqrt{\int_M u^4 d\mu_g}}
$$
and, since there are functions $u$ that are $C^0$ close to $1$, but which are wildly oscillatory on a microscopic scale,  one immediately sees
that there are sequences of metrics $\hat{g}_j$ in the given conformal class $[g]: = \{ u^2 g\}$ that remain $C^0$ close to $g$, but have 
$\mathscr{E}(\hat{g}_j) \to +\infty$. But since $\int s^2 d\mu_{\hat{g}} \geq [\mathscr{E}(\hat{g})]^2$ by the Cauchy-Schwarz inequality, 
this means that $\int s^2 d\mu \to + \infty$ among metrics in our arbitrary conformal class $[g]$. In particular,
this shows that there exist metrics on any $4$-manifold  $M$ for which 
$$\int \frac{s^2}{24} d\mu > \int |W_+|^2d\mu$$
and, indeed, that there exist such metrics in any conformal class. 
\end{xpl} 

\begin{xpl} Let  $g$  be a $J$-compatible K\"ahler metric of {\sf non-constant} scalar curvature on a compact complex surface  $(M^4,J)$;
a generic K\"ahler metric in any K\"ahler class will have this property \cite{calabix}. 
By the solution of the Yamabe problem \cite{lp,rick}, there exists a  constant-scalar-curvature metric $\check{g} = u^2 g$ conformal to 
$g$ that   minimizes the Yamabe functional $\mathscr{E}$ in the conformal class $[g]$; and, because $M$ has real dimension $4$, such a 
{\sf Yamabe metric} $\check{g}$  also  minimizes $\int s^2 d\mu$ in its conformal class 
\cite[Proposition 2.1]{bcg1}. Thus $\check{g}$ must satisfy 
$$\int \frac{s^2}{24} d\mu < \int |W_+|^2d\mu$$
because equality is already achieved   by the higher-energy  metric $g$. Thus, any compact  $4$-manifold 
that admits a complex structure of K\"ahler type will admit metrics for which self-dual Weyl outweighs the scalar curvature. 
\end{xpl}

Thus, our question of balance only becomes sensible if we   somehow  turn it   into a  conformally invariant question, 
or    else   narrow the scope of  the question in a way  that  effectively precludes  conformal rescaling. One particularly nice such modification, which coincides with the original question in the Einstein case,  is to ask whether 
\begin{equation}
\label{oyster}
 \int_M |W_+|^2 d\mu \stackrel{?}{\geq} \int_M \left(\frac{s^2}{24} - \frac{|\mathring{r}|^2}{2}\right) d\mu ,
\end{equation}
since the Gauss-Bonnet formula \eqref{gb} implies that the right-hand side is also conformally invariant. 
By combining Gauss-Bonnet with the Thom-Hirzebruch signature formula \eqref{th}, it is now easy to see that 
this modified question is exactly equivalent  to asking when
\begin{equation}
\label{shucks}
\frac{1}{4\pi^2}  \int_M |W_+|^2 d\mu \stackrel{?}{\geq} \frac{1}{3} (2\chi + 3\tau ) (M). 
\end{equation}
Asking whether such an inequality holds for {\em all} metrics on a given $M$ is then a question  about the   infimum of the {\em Weyl functional} 
 $$\mathscr{W}([g ]) :=  \int_M \left(|W_+|^2 + |W_-|^2 \right) d\mu_g , $$
which measures the deviation of a conformal class $[g]$  from local conformal flatness. 
Since  equation   \eqref{th}  implies  that 
 $$
 \mathscr{W}([g ]) =  - 12\pi^2 \tau (M) + 2  \int_M |W_+|^2  d\mu_g, 
 $$ knowing the infimum of $\mathscr{W}$ 
 is equivalent  to understanding the differential-topological invariant $\inf_g \int_M |W_+|^2d\mu_g$,
and for our purposes this will  be  the  more  convenient  formulation of  the problem.

The infimum  of the Weyl functional seems to have been first discussed 
by  Atiyah, Hitchin, and Singer \cite{AHS}, who 
discovered that  the infimum is achieved  on $\CP_2$  by  the Fubini-Study metric; indeed, 
they more generally observed that  \eqref{th} implies that any  metric on a compact oriented $4$-manifold $M$ satisfies 
\begin{equation}
\label{avast}
\frac{1}{4\pi^2} \int_M |W_+|^2 d\mu_g \geq 3 \tau (M), 
\end{equation}
 with equality iff $W_-\equiv 0$. This   seems to have then inspired 
 Osamu Kobayashi \cite{okobweyl} to examine the  key example of  
 $M = S^2 \times S^2$, where  inequality  \eqref{avast}   just  becomes the trivial statement that  $\int_M |W_+|^2d\mu \geq 0$, 
but   where 
 this  lower bound is impossible to achieve, since   a theorem of Kuiper \cite{kuiper} guarantees  that 
$S^4$ is the only   simply-connected $4$-manifold that admits a metric with $W_+= W_-\equiv 0$. 
Kobayashi   conjectured that the infimum on $S^2\!\times\! S^2$ is achieved by the 
K\"ahler-Einstein metric  arising  as  the Riemannian product of two  round $2$-spheres of the same radius.
 Kobayashi's evidence for 
this conjecture  was modest, but interesting; by calculating the second variation of $\mathscr{W}$, 
he proved  that this standard Einstein metric is a local minimum of the Weyl functional, and he also  checked  
  that it is the unique global minimizer of the restriction of $\mathscr{W}$  to the  K\"ahler metrics on  $\CP_1 \times \CP_1$. 

While Kobayashi's  evidence was admittedly   fragmentary, 
Matthew  Gursky  later     discovered  a  beautiful   general result \cite{gursky} that  puts  
the question  on an entirely different footing:

\begin{them}[Gursky]
Let $M$ be a compact oriented $4$-manifold such that  $b_+(M)\neq 0$, and let $[g]$ be any conformal class 
with $Y_{[g]} > 0$. Then  
 \begin{eqnarray}
\label{caterpillar} 
\frac{1}{4\pi^2}  \int_M |W_+|^2 d\mu_g {\geq} \frac{1}{3} (2\chi + 3\tau ) (M), 
\end{eqnarray}
with equality iff $[g]$ is the conformal class of  a K\"ahler-Einstein metric. 
\end{them}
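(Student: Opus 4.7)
The plan is to couple a nontrivial self-dual harmonic $2$-form $\omega$ --- available because $b_+(M) \neq 0$ --- with the Yamabe variational principle. Pick any $g \in [g]$ (eventually a Yamabe metric) and a $g$-harmonic self-dual $2$-form $\omega \not\equiv 0$. The Weitzenb\"ock identity $\nabla^*\nabla \omega - 2 W_+(\omega) + \tfrac{s}{3}\omega = 0$, paired with $\omega$, yields
\[
|\nabla \omega|^2 - \tfrac{1}{2}\Delta|\omega|^2 + \tfrac{s}{3}|\omega|^2 = 2\langle W_+(\omega), \omega\rangle.
\]
Two pointwise estimates now enter: since $W_+$ is a trace-free self-adjoint endomorphism of the rank-$3$ bundle $\Lambda^+$, its largest eigenvalue is at most $\sqrt{2/3}\,|W_+|$, so $\langle W_+(\omega), \omega\rangle \leq \sqrt{2/3}\,|W_+|\,|\omega|^2$; and the refined Kato inequality for harmonic self-dual $2$-forms reads $|\nabla \omega|^2 \geq \tfrac{3}{2}|\nabla|\omega||^2$.

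The key step is the substitution $\psi := |\omega|^{1/2}$. Setting $f := |\omega|$, combining the Weitzenb\"ock identity with the two bounds, and using $-\tfrac{1}{2}\Delta f^2 = -f\Delta f - |\nabla f|^2$, one obtains at points where $f > 0$
\[
-f\Delta f + \tfrac{1}{2}|\nabla f|^2 + \tfrac{s}{3}f^2 \leq \tfrac{2\sqrt{6}}{3}|W_+|\,f^2.
\]
Dividing by $f$ and using $|\nabla f|^2/f = 4|\nabla\psi|^2$ together with $\Delta f = 2\psi\Delta\psi + 2|\nabla\psi|^2$, the cross-terms cancel and one is left with $-2\psi\Delta\psi + \tfrac{s}{3}\psi^2 \leq \tfrac{2\sqrt{6}}{3}|W_+|\psi^2$. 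Integrating over $M$ (justified across the measure-zero zero set of $\omega$ by a standard $\sqrt{f^2+\varepsilon}\to f$ regularization) and multiplying by $3$ yields
\[
6\int_M|\nabla\psi|^2\, d\mu_g + \int_M s\psi^2\, d\mu_g \leq 2\sqrt{6}\int_M |W_+|\,\psi^2\, d\mu_g.
\]
The left-hand side is precisely the Yamabe energy numerator of the conformally rescaled metric $\psi^2 g \in [g]$, so the Yamabe variational principle bounds it below by $Y_{[g]}\bigl(\int_M \psi^4\, d\mu_g\bigr)^{1/2}$, while Cauchy--Schwarz bounds the right-hand side above by $2\sqrt{6}\bigl(\int|W_+|^2\bigr)^{1/2}\bigl(\int\psi^4\bigr)^{1/2}$. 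Since $Y_{[g]} > 0$ and $\psi \not\equiv 0$, canceling $(\int\psi^4)^{1/2}$ yields the conformally invariant bound $\int_M |W_+|^2\, d\mu \geq Y_{[g]}^2/24$.

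To conclude, specialize to a Yamabe representative $g_Y \in [g]$, where $s$ is a positive constant and $Y_{[g]}^2 = \int_M s^2\, d\mu_{g_Y}$. Since $|\mathring{r}|^2 \geq 0$,
\[
\int_M |W_+|^2 \geq \frac{1}{24}\int_M s^2\, d\mu_{g_Y} \geq \int_M\Bigl(\frac{s^2}{24} - \frac{|\mathring{r}|^2}{2}\Bigr)\, d\mu_{g_Y},
\]
which, via \eqref{gb} and \eqref{th} as in the derivation of \eqref{shucks}, is precisely $\tfrac{1}{4\pi^2}\int_M|W_+|^2 \geq \tfrac{1}{3}(2\chi + 3\tau)$. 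The hard part is the equality case: saturation forces $\mathring{r} \equiv 0$ (so $g_Y$ is Einstein), equality in the refined Kato and $W_+$-eigenvalue bounds (so $W_+$ has a double eigenvalue with $\omega$ in the simple eigendirection and $\nabla\omega$ confined to the twistor summand of $T^*M \otimes \Lambda^+$), and equality in Cauchy--Schwarz (so $|W_+|$ is a constant multiple of $|\omega|$). Together with $s$ constant, these rigidity conditions should force $\omega$ to be parallel after rescaling, defining a K\"ahler structure compatible with an Einstein metric in $[g]$.
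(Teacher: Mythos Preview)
Your argument for the inequality is correct, but it follows a different route from the one the paper attributes to Gursky. The paper's sketch has Gursky produce, via a conformal-rescaling (modified Yamabe) argument, an actual metric $\tilde g\in[g]$ for which the \emph{pointwise} bound $2\sqrt{6}\,|W_+|\geq s$ holds everywhere; integrating this against the volume form and applying Cauchy--Schwarz then yields $Y_{[g]}\leq 2\sqrt{6}\,\|W_+\|_{L^2}$. You instead stay with an arbitrary background metric, invoke the refined Kato inequality $|\nabla\omega|^2\geq\tfrac{3}{2}\,|\nabla|\omega||^2$ for harmonic self-dual $2$-forms, and feed $\psi=|\omega|^{1/2}$ directly into the Yamabe quotient. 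Your route is more elementary in that it avoids solving any auxiliary variational problem; Gursky's route, on the other hand, yields a pointwise statement that is sometimes useful in its own right. Both land on \eqref{hookah}, and the passage from there to \eqref{caterpillar} via a Yamabe minimizer and Gauss--Bonnet/signature is identical.

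The one genuine gap is the equality case. You have correctly identified the list of rigidity conditions --- $\mathring r\equiv 0$, equality in refined Kato, $\omega$ pointing along the top eigenspace of $W_+$ with the bottom eigenvalue repeated, $|W_+|$ a constant multiple of $|\omega|$, and $\psi$ achieving the Yamabe infimum --- but the sentence ``these rigidity conditions should force $\omega$ to be parallel after rescaling'' is not a proof. What actually needs to be argued is that equality in the Yamabe step forces $\psi^2 g$ to be a constant-scalar-curvature Yamabe metric; that for an Einstein metric with $s>0$ equality in the eigenvalue bound together with the Cauchy--Schwarz equality forces $|W_+|$ (hence $|\omega|$) to be constant; and that once $|\omega|$ is a nonzero constant, the Weitzenb\"ock formula combined with equality in the Kato and eigenvalue bounds forces $\nabla\omega=0$. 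None of these steps is deep, but each requires a line or two, and as written the equality characterization is asserted rather than established.
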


\noindent Here we 
recall that the {\em Yamabe constant} $Y_{[g]}$ 
of a  conformal class $[g]$   is by definition the infimum 
of the Yamabe functional $\mathscr{E} (\hat{g})$  over  $\hat{g}\in [g]$, and that $Y_{[g]}$ is positive 
iff $[g]$ contains   a metric of positive scalar curvature. If $M$ is any compact oriented 
$4$-manifold, also recall that  $b_+(M)$ is defined to be  the dimension of any maximal   subspace
of $H^2 (M,  \RR)$ on which the intersection pairing $H^2 (M, \RR) \times H^2(M, \RR)\to \RR$ is positive 
definite;  and since, for   any Riemannian metric $g$ on $M$, the  self-dual/anti-self-dual decomposition 
 $$\Lambda^2 = \Lambda^+\oplus \Lambda^-$$
of the bundle of $2$-forms induces an intersection-form-adapted   decomposition 
$$
\mathcal{H}_g= \mathcal{H}_g^+ \oplus \mathcal{H}_g^-
$$
of the harmonic $2$-forms $\mathcal{H}_g\cong H^2 (M, \RR)$ into eigenspaces of the Hodge star operator 
$\star : \mathcal{H}_g\to \mathcal{H}_g$, it easily 
 follows that  $b_+(M)$ is exactly  the dimension of
the space $\mathcal{H}_g^+$ of  harmonic self-dual $2$-forms on $(M,g)$. Gursky's argument uses  the Weitzenb\"ock formula for
self-dual harmonic $2$-forms, together with a conformal rescaling argument,  to  show that every 
conformal class on any   $M^4$ with $b_+\neq 0$  contains a metric with 
$2\sqrt{6} |W_+| \geq s$ everywhere. Integrating and applying Cauchy-Schwarz, one then 
concludes that 
 \begin{equation}
\label{hookah} 
\int_M |W_+|^2d\mu \geq  \frac{1}{24} \left(Y_{[g]}\right)^2 
\end{equation}
whenever  $Y_{[g]} \geq  0$. The result then follows, because evaluation 
of the   right-hand side of \eqref{oyster}  
at a  Yamabe metric demonstrates   that this expression   is less than (or equal to) 
 the right-hand side of \eqref{hookah} 
 for any conformal class. 

\medskip

While Gursky's theorem 
 certainly seems like a huge step in the direction of answering Kobayashi's question, 
Gursky's   method  unfortunately cannot provide any  information at all about conformal classes with $Y_{[g]} < 0$; and, 
for better or worse, 
 ``most'' conformal classes on any $4$-manifold   inevitably  have negative Yamabe constant. 
Fortunately, an entirely different method does allow us to plunge into this Yamabe-negative  realm; but this method only works
on the  small 
 class of $4$-manifolds with   $2\chi + 3\tau > 0$ that 
admit both symplectic structures and Riemannian metrics with $s > 0$. These are 
exactly   \cite{chenlebweb,lebcam}  the  previously-mentioned  manifolds
 that 
  carry  both a symplectic structure  and a $\lambda > 0$ Einstein metric. Equivalently, 
they are the underlying smooth $4$-manifolds of the  {\em del Pezzo surfaces},
meaning the  compact complex surfaces that have  ample anti-canonical line bundle $K^{-1}$. Up to oriented diffeomorphism, there are exactly
ten  of these manifolds, namely $S^2\! \times\! S^2$ and the connected sums $\CP_2 \# k \overline{\CP}_2$, $k = 0, 1, \ldots , 8$. 
Most of these actually carry  K\"ahler-Einstein metrics, and, in the spirit of Kobayashi's conjecture, one could hope
that their conformal classes  might  exactly minimize  the Weyl functional. 

Because  each del Pezzo $4$-manifold $M$ has  $b_+(M)=1$,  there is, up to a multiplicative constant, a unique 
self-dual harmonic $2$-form $\omega$ on $M$ 
for each Riemannian metric $g$, and this $\omega$ moreover only depends on the conformal class $[g]$ of the given metric. 
When this $\omega$ is everywhere non-zero, it is automatically  a symplectic form, and one therefore says that 
$[g]$ is a conformal class of {\em symplectic type}.  Like Gursky's  condition $Y_{[g]} > 0$, this new condition
 is open in the $C^2$ topology; however, it is also genuinely different, because 
 one can  construct sequences of conformal classes  $[g_j]$
 of symplectic type with $Y_{[g_j]} \to -\infty$. Nonetheless, inequality \eqref{caterpillar} can still  be shown \cite{lebdelpezzo} 
 to hold in  these Yamabe-negative depths: 

  \begin{them}[L]
 Let $M^4$ be the  underlying  smooth compact oriented manifold 
 of a del Pezzo surface. Then  any conformal class $[g]$ of {\em symplectic type}
 on $M$  satisfies  
$$ \frac{1}{4\pi^2}  \int_M |W_+|^2 d\mu_g \geq \frac{1}{3} (2\chi + 3\tau ) (M), $$
 with equality iff $[g]$ contains a K\"ahler-Einstein metric $g$ with $s >0$. 
  \end{them}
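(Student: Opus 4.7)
The plan is to mimic Gursky's argument, replacing his Yamabe-positive input by a Seiberg-Witten input made possible by the symplectic-type hypothesis, which remains valid even when $Y_{[g]}<0$. The nowhere-zero self-dual harmonic form $\omega$ of $[g]$ is symplectic by hypothesis; it compatibly tames an almost-complex structure and determines a spin$^c$ structure $\mathfrak{c}_\omega$ with $c_1(\mathfrak{c}_\omega)=c_1(M)$. On a del Pezzo surface $c_1(M)\cdot[\omega]>0$, placing $\omega$ in the chamber of the $b_+=1$ wall on which the Seiberg-Witten invariant of $\mathfrak{c}_\omega$ is nontrivial. Hence, for every metric $g\in[g]$, the equations $\drc_A\Phi=0$, $F_A^+=\sigma(\Phi)$ admit a solution $(A,\Phi)$.

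Two Weitzenb\"ock formulas drive the core estimate. The SW Weitzenb\"ock applied to $\Phi$ yields
$$\int_M\bigl(|\Phi|^4 + s|\Phi|^2\bigr)d\mu_g \leq 0,$$
while the standard Bochner formula $\Delta\omega=\nabla^*\nabla\omega-2W_+(\omega,\cdot)+(s/3)\omega$ for the self-dual harmonic form $\omega$, paired with $\omega$, gives
$$\int_M\tfrac{s}{3}|\omega|^2\,d\mu_g \;\leq\; 2\int_M\langle W_+(\omega),\omega\rangle\,d\mu_g \;\leq\; 2\sqrt{\tfrac{2}{3}}\int_M|W_+|\,|\omega|^2\,d\mu_g,$$
the final step using that the top eigenvalue of a traceless symmetric operator on $\RR^3$ is at most $\sqrt{2/3}$ its Frobenius norm.

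A cohomological input couples the SW piece to the topology. The curvature equation $F_A^+=\sigma(\Phi)$ combined with $|\sigma(\Phi)|^2=|\Phi|^4/8$ and the Chern--Weil identity $\int F_A^+\wedge\omega=2\pi\,c_1\cdot[\omega]$, together with Cauchy--Schwarz and the reverse Cauchy--Schwarz $(c_1\cdot[\omega])^2\geq c_1^2\cdot[\omega]^2$ (valid in the Lorentzian Hodge-index form since $c_1$ and $[\omega]$ lie in the same component of the positive cone), yields $\int_M|\Phi|^4\,d\mu_g\geq 32\pi^2(2\chi+3\tau)(M)$. Feeding this back into the SW Weitzenb\"ock and applying Cauchy--Schwarz one more time gives the conformally-invariant lower bound $\int s_-^2\, d\mu_g\geq 32\pi^2(2\chi+3\tau)$, where $s_-=\max(-s,0)$. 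The harmonic-form Weitzenb\"ock is then used --- after a Yamabe rescaling within $[g]$ that makes $s$ constant without altering $\int|W_+|^2$ --- to convert this into the asserted bound on $\int|W_+|^2$; Gauss--Bonnet \eqref{gb} handles the remaining $|\mathring{r}|^2$ and volume dependences and produces the sharp constant $\tfrac{1}{3}$. Tracing equality through every step simultaneously forces $\nabla_A\Phi=0$, $\nabla\omega=0$, $W_+\omega=(s/6)\omega$, and $s$ positive constant; these together imply that $g$ is K\"ahler--Einstein.

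\emph{Main obstacle.} The decisive technical hurdle is the final combination, where the prefactors from the two Weitzenb\"ock formulas, the eigenvalue bound $\sqrt{2/3}$, the cohomological factor $32\pi^2$, and the Yamabe-minimization step must fit together so that the numerical factor $\tfrac{1}{3}$ emerges sharply. Every intermediate Cauchy--Schwarz must be equality-attainable at a positive K\"ahler--Einstein metric, which pins down the correct weights and parameters. A secondary difficulty is the $b_+=1$ chamber analysis: Taubes' theorem in its original form assumes $b_+\geq 2$, and the chamber-dependent version used here must be supplied via wall-crossing reasoning, with the symplectic-type hypothesis guaranteeing that $\omega$ sits on the correct side of the wall.
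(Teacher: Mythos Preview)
Your Seiberg--Witten input is correct and your $b_+=1$ chamber analysis is on target, but the final conversion step has a genuine gap. You propose to Yamabe-rescale so that $s$ is constant and then invoke the harmonic-form Weitzenb\"ock inequality
\[
\int_M \frac{s}{3}\,|\omega|^2\,d\mu_g \;\leq\; 2\sqrt{\tfrac{2}{3}}\int_M |W_+|\,|\omega|^2\,d\mu_g
\]
to turn the scalar-curvature estimate into a lower bound on $\|W_+\|_{L^2}$. But the whole point of this theorem is that it extends into the region $Y_{[g]}<0$ where Gursky's result says nothing; and at a Yamabe minimizer with $s<0$, the displayed inequality is vacuous---a negative quantity bounded above by a non-negative one---so no information about $|W_+|$ emerges. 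Your SW estimate $\int s_-^2\,d\mu \geq 32\pi^2(2\chi+3\tau)$ is fine, but it simply does not couple to $W_+$ along the route you describe. (Incidentally, Gursky's own argument does not use the Yamabe gauge either; he rescales to a metric with $2\sqrt{6}\,|W_+|\geq s$ pointwise, which is a different normalization.)

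The paper's method, following \cite{lebdelpezzo}, repairs this by choosing a different conformal representative: not the Yamabe minimizer, but the unique $g\in[g]$ for which the self-dual harmonic form satisfies $|\omega|_g\equiv\sqrt{2}$, making $(M,g,\omega)$ an \emph{almost-K\"ahler} manifold. In this gauge the Weitzenb\"ock formula collapses to the pointwise identity $W_+(\omega,\omega)=\tfrac{s}{3}+\tfrac{1}{2}|\nabla\omega|^2$, and the del Pezzo hypothesis $c_1\cdot[\omega]>0$ controls $\int W_+(\omega,\omega)\,d\mu$ directly through a Chern--Weil type expression. Combined with the pointwise eigenvalue bound $|W_+|^2\geq \tfrac{3}{8}[W_+(\omega,\omega)]^2$ and the Hodge-index inequality you already identified, this yields the sharp estimate without any appeal to the sign of $Y_{[g]}$. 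The almost-K\"ahler normalization is the missing idea in your outline.
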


  \noindent Here the method of proof focuses on choosing a representative $g$ for the conformal class $[g]$ 
  for which  the harmonic $2$-form $\omega$ has  pointwise norm $|\omega |\equiv \sqrt{2}$;
  this makes  $(M, g, \omega )$ into an {\em almost-K\"ahler} manifold with $c_1\cdot [\omega ]> 0$,
  and  this then gives rise to sharp lower bounds for the Weyl functional. It is also worth mentioning that there 
 are two del Pezzo manifolds, $\CP_2\#  \overline{\CP}_2$ and $\CP_2\# 2 \overline{\CP}_2$, where the associated 
 Einstein metric is only {\em conformally} K\"ahler, but nevertheless still appears to minimize the Weyl functional  \cite{lebdelpezzo}.

  
  The upshot is that Kobayashi's conjecture seems increasingly plausible for  $S^2\! \times\! S^2$
  and its del Pezzo cousins. But could  the lower bound \eqref{caterpillar} also hold
  in the Yamabe-negative realm on many other manifolds? One might first worry about Taubes'  theorem \cite{tasd}, asserting 
  that, for any compact oriented $4$-manifold $M$, the connected sum $M\# k \overline{\CP}_2$ will admit   metrics with $W_+\equiv 0$ 
  for  astronomically large  $k \gg 0$; but $2\chi + 3\tau \ll 0$ for these examples, so    \eqref{caterpillar} becomes  a tautology in this context. 
  More pertinently, 
  compact hyperbolic $4$-manifolds
   have $W_+\equiv 0$ and $2\chi + 3\tau > 0$, and so certainly violate
  \eqref{caterpillar}; but such  locally-symmetric examples  never admit positive-scalar-curvature metrics \cite{gvln2},
  and thus  do not  carry  any metrics to  which Gursky's result  applies.  This makes it  worth asking 
  whether there are $4$-manifolds   that {\em do} carry positive-scalar-curvature metrics, 
  but  which also carry some Yamabe-negative conformal classes for which   \eqref{caterpillar} fails. Our first  main result 
 is that this phenomenon is  actually extremely common:

\begin{main}
\label{madteaparty} For any sufficiently large integer $m$, the smooth compact simply-connected  spin manifold 
$$M= m(S^2\!\times\! S^2):= \underbrace{(S^2\!\times\! S^2)\# \cdots \# (S^2\!\times\! S^2)}_m$$
admits Riemannian conformal classes  $[h]$ such that 
\begin{equation}
\label{rabbit} 
\frac{1}{4\pi^2} \int_{M} |W_+|^2 d\mu_{h} < \frac{1}{3} (2\chi + 3\tau )(M).
\end{equation}
Similarly, for any any sufficiently large integer $m$ and 
any integer $n$ such that  $\frac{n}{m}$ is sufficiently close to 
$1$, the smooth compact  simply-connected  non-spin manifold 
$$M= m\CP_2 \# n \overline{\CP}_2 := 
 \underbrace{\CP_2 \# \cdots \# \CP_2}_m\#  \underbrace{ \overline{\CP}_2 \# \cdots \#  \overline{\CP}_2}_n$$ 
admits conformal classes  $[h]$  that satisfy inequality \eqref{rabbit}. 
\end{main}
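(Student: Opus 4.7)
The plan is to produce, on each listed manifold $M$, a Riemannian conformal class $[h]$ whose $\int_M |W_+|^2 \, d\mu_h$ is strictly below $\frac{4\pi^2}{3}(2\chi + 3\tau)(M)$. Since the right-hand side grows linearly in $m$, it suffices to exhibit metrics whose $\int|W_+|^2$ grows sublinearly in $m$; an anti-self-dual (ASD) conformal class, for which $W_+\equiv 0$, would make \eqref{rabbit} automatic as soon as $(2\chi+3\tau)(M)>0$.

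For the non-spin case $M = m\CP_2 \# n\overline{\CP}_2$, I would invoke Taubes' theorem on the existence of ASD conformal classes on iterated blow-ups: for any closed smooth $4$-manifold $X$ with $b_+(X)\geq 1$, there is a threshold $k_0(X)$ such that $X \# k\overline{\CP}_2$ admits an ASD conformal class for every $k\geq k_0(X)$. Applying this with $X = m\CP_2$ furnishes an ASD class on $M$ as soon as $n\geq k_0(m\CP_2)$; any such class satisfies $W_+\equiv 0$, so \eqref{rabbit} reduces to $(2\chi+3\tau)(M) = 5m - n + 4 > 0$, i.e.\ $n < 5m+4$. The quantitative task is then to check that Taubes' threshold $k_0(m\CP_2)$ can be kept below $5m+4$, and in fact allows $n/m$ to be chosen arbitrarily close to $1$ for large $m$; this follows from quantitative refinements of Taubes' gluing estimates.

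For the spin case $M = m(S^2\times S^2)$ the $\overline{\CP}_2$-based approach fails, because those summands destroy the spin condition. Here one must find a spin-compatible source of conformal classes with $\int|W_+|^2$ very small. One natural route is to pass from a Taubes-constructed ASD $4$-manifold to a double cover branched along a carefully chosen embedded surface $S$, where $S$ is chosen so that the cover is simply-connected and spin; the cover then inherits $W_+\equiv 0$ from the base, and by varying the branch locus one can arrange, via Wall's stable-diffeomorphism theorem, that it is diffeomorphic to $m(S^2\times S^2)$ for the desired $m$. An alternative is to glue paired hyperkähler ALE instantons (for instance Eguchi-Hanson spaces, arranged in spin-preserving pairs) into a Calabi-Yau background so as to produce a spin manifold stably diffeomorphic to $m(S^2\times S^2)$ and carrying metrics whose $\int|W_+|^2$ is bounded independently of $m$.

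The main obstacle is the spin case: one must simultaneously arrange that the resulting metric is (nearly) ASD, that the underlying $4$-manifold is simply-connected and spin, and that it is diffeomorphic to $m(S^2\times S^2)$ (rather than some other simply-connected spin $4$-manifold with the same intersection form). The non-spin case is essentially a packaging of Taubes' theorem inside the positivity window $n < 5m+4$, but for the spin case a genuinely new construction is required, marrying Taubes-style ASD flexibility with the rigidity of the spin structure. Once such a construction is in place, \eqref{rabbit} reduces to the observation that the bounded curvature contribution of the gluing model is strictly less than the linearly-growing topological target on the right.
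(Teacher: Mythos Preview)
Your proposal has genuine gaps in both cases, and the paper's route is quite different from either of your sketches.

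In the non-spin case, the entire argument rests on the assertion that ``quantitative refinements of Taubes' gluing estimates'' force $k_0(m\CP_2)$ to grow at most linearly in $m$, indeed with slope close to $1$. No such refinement is available; Taubes' existence theorem is notoriously non-effective, and making it effective in the way you need would itself be a substantial theorem. There is also a hard obstruction you have overlooked: an anti-self-dual metric forces $\tau(M)\leq 0$, hence $n\geq m$, and $n=m$ is excluded by Kuiper's theorem (the only simply-connected locally conformally flat $4$-manifold is $S^4$). So your method could at best reach $n>m$, whereas the statement---sharpened in the paper to the range $\tfrac{4}{5}+\varepsilon<\tfrac{n}{m}<2-\varepsilon$---also covers $n<m$.

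In the spin case, the branched-cover idea fails as stated: the pullback of a smooth metric under a branched cover has a cone singularity along the ramification locus, and any smoothing destroys the condition $W_+\equiv 0$; the cover does \emph{not} inherit anti-self-duality. Your alternative of gluing ALE hyperk\"ahler pieces into a Calabi--Yau background is not a proof but a program: you give no mechanism for controlling the diffeomorphism type so that the result is $m(S^2\times S^2)$ rather than, say, something with nonzero signature.

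The paper's argument avoids anti-self-dual metrics altogether. Its building block is a simply-connected minimal complex surface $X$ of general type with $\tau(X)>0$ (spin or non-spin as required), equipped with a metric approximating the orbifold K\"ahler--Einstein metric on its pluricanonical model, so that $\tfrac{1}{4\pi^2}\int_X|W_+|^2\approx\tfrac{1}{3}c_1^2(X)$. The crucial point is that on the orientation-reversed copy $\overline{X}$ this same metric has $\tfrac{1}{4\pi^2}\int_{\overline{X}}|W_+|^2=\tfrac{1}{4\pi^2}\int_{X}|W_+|^2-3\tau(X)$, so each pair $X\#\overline{X}$ contributes roughly $\tau(X)$ less to $\tfrac{1}{4\pi^2}\int|W_+|^2$ than to $\tfrac{1}{3}(2\chi+3\tau)$. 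One then forms $(k+\ell)[X\#\overline{X}]\#(2k+\ell)(S^2\times S^2)$, glues conformally on flat balls so that $\|W_+\|_{L^2}^2$ is additive, and invokes Wall's stable classification to identify the result with $m(S^2\times S^2)$ (respectively $m\CP_2\# m\overline{\CP}_2$); in the non-spin case, further $\CP_2$ or $\overline{\CP}_2$ summands extend the range of $n/m$.
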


\noindent   The proof of this result can be found in \S  \ref{worksite}  below. 

\medskip 

On the other hand, given the role of almost-K\"ahler geometry in the above discussion, it also seems natural to explore 
our question of balance in the almost-K\"ahler context. Here, the scales can tip either way. Indeed,  if we  choose to impose 
one  of the  two key  conditions that 
played a role in our previous discussion, systematic but opposing   patterns emerge:

\begin{main}
\label{knight} 

 If  $(M,g, \omega)$ is a  compact almost-K\"ahler $4$-manifold  such that $\delta W_+=0$, where $\delta$ denotes 
the divergence operator,  then 
$$
\int_M \frac{s^2}{24}d\mu_g \geq  \int_M |W_+|^2d\mu_g ,
$$
with equality iff $(M, g ,\omega )$ is  K\"ahler.  By contrast, if   $(M,g, \omega)$
instead has scalar curvature $s\geq 0$, then 
$$
\int_M |W_+|^2d\mu_g  \geq
 \int_M \frac{s^2}{24}d\mu_g   ,
$$
again with equality iff $(M, g ,\omega )$ is  K\"ahler. In particular, 
any compact almost-K\"ahler $4$-manifold  $(M,g, \omega)$ with 
 $\delta W_+=0$ and  $s\geq 0$ is necessarily K\"ahler. 
\end{main}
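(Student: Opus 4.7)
The proof splits into two opposing inequalities, each requiring its own technique, followed by a short combinatorial deduction of the ``in particular'' statement. The common starting point is that, on an almost-K\"ahler 4-manifold, the fundamental 2-form $\omega$ is self-dual, closed, and of constant pointwise length $|\omega|^2 \equiv 2$, hence automatically harmonic. The Weitzenb\"ock formula $\nabla^*\nabla \omega = 2W_+(\omega) - \tfrac{s}{3}\omega$ for harmonic self-dual 2-forms, paired with $\omega$ and using $\langle\nabla^*\nabla\omega,\omega\rangle = |\nabla\omega|^2$ (by constancy of $|\omega|^2$), yields the pointwise identity
\[
\mu := \big\langle W_+(\hat\omega), \hat\omega\big\rangle = \frac{s}{6} + \frac{|\nabla\omega|^2}{4}, \qquad \hat\omega := \omega/\sqrt{2},
\]
which is the essential input for both cases.

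The $s \geq 0$ inequality now follows from an elementary spectral observation: any traceless self-adjoint endomorphism $T$ of a three-dimensional Euclidean space satisfies $|T|^2 \geq \tfrac{3}{2}\langle Tv,v\rangle^2$ for every unit vector $v$, with equality iff the spectrum of $T$ is $(\mu,-\mu/2,-\mu/2)$ with $v$ a $\mu$-eigenvector. Applied to $T = W_+$ acting on $\Lambda^+$ and $v = \hat\omega$, this gives
\[
|W_+|^2 \;\geq\; \tfrac{3}{2}\mu^2 \;=\; \tfrac{3}{2}\left(\tfrac{s}{6}+\tfrac{|\nabla\omega|^2}{4}\right)^2 \;\geq\; \tfrac{s^2}{24},
\]
where $s \geq 0$ is invoked only in the last step. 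Integrating yields the desired inequality; pointwise equality forces $|\nabla\omega|\equiv 0$, so $\omega$ is parallel and $(M,g,\omega)$ is K\"ahler.

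The $\delta W_+ = 0$ inequality runs in the opposite direction, and the pointwise comparison is no longer available (even the K\"ahler case achieves equality only pointwise), so the proof must proceed through an integrated Bochner identity. My plan is to invoke the classical Bochner-Weitzenb\"ock identity for the self-dual Weyl curvature on a 4-manifold with $\delta W_+ = 0$, which expresses $\int_M |\nabla W_+|^2 \, d\mu_g$ as a linear combination of $\int_M s|W_+|^2\, d\mu_g$ and $\int_M \det W_+ \, d\mu_g$, and combine it with the almost-K\"ahler spectral constraint above. The identity for $\mu$ pointwise controls the top eigenvalue of $W_+$ in terms of $s$ and $|\nabla\omega|^2$, which in turn constrains $\det W_+$; substituting these bounds into the integrated Bochner identity and regrouping should yield the reverse integral inequality $\int|W_+|^2 \leq \int s^2/24$, with equality forcing both the K\"ahler spectral structure of $W_+$ and $\nabla\omega \equiv 0$.

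The main obstacle lies in the $\delta W_+ = 0$ case, where the spectral information coming from the almost-K\"ahler identity must be reconciled with the Bochner identity in a way that drives the inequality in the correct direction, since the relevant pointwise analog of the conclusion plainly fails. Once both inequalities are established, the ``in particular'' claim is immediate: the twin hypotheses $\delta W_+ = 0$ and $s \geq 0$ together force $\int|W_+|^2 = \int s^2/24$, and the equality case of either inequality then implies that $(M,g,\omega)$ is K\"ahler.
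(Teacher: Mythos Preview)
Your treatment of the $s\geq 0$ case is correct and is exactly the paper's argument: the Weitzenb\"ock formula for the harmonic self-dual form $\omega$ gives the pointwise identity $W_+(\omega,\omega)=\tfrac{s}{3}+\tfrac{1}{2}|\nabla\omega|^2$, and the trace-free spectral inequality $|W_+|\geq \sqrt{\tfrac{3}{2}}\,\tfrac{1}{2}W_+(\omega,\omega)$ then yields $|W_+|\geq s/(2\sqrt{6})$, with equality forcing $\nabla\omega=0$.

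The $\delta W_+=0$ case, however, is a genuine gap. You propose to use the \emph{traced} Weitzenb\"ock identity, which produces $\int|\nabla W_+|^2$ in terms of $\int s|W_+|^2$ and $\int\det W_+$, and then to ``constrain $\det W_+$'' via the known value of $\mu=\langle W_+(\hat\omega),\hat\omega\rangle$. But $\mu$ is only the quadratic form of $W_+$ evaluated at $\hat\omega$; in the strictly almost-K\"ahler case $\hat\omega$ is \emph{not} an eigenvector of $W_+$, so $\mu$ does not even pin down the top eigenvalue, and knowing a single Rayleigh quotient of a trace-free symmetric $3\times 3$ matrix places essentially no useful two-sided bound on its determinant. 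The scalar identity you are aiming at has forgotten the direction $\omega$ altogether, and there is no evident way to re-insert the almost-K\"ahler information afterward.

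The paper avoids this by never tracing. Instead, it pairs the pointwise Weitzenb\"ock formula
\[
0=\nabla^*\nabla W_+ +\tfrac{s}{2}W_+ -6\,W_+\!\circ W_+ +2|W_+|^2 I
\]
directly with the symmetric tensor $\omega\otimes\omega$ and integrates. The term $\langle W_+,\nabla^*\nabla(\omega\otimes\omega)\rangle$ is computed using the almost-K\"ahler fact that $\nabla\omega$ takes values in $K\oplus\overline{K}\subset\Lambda^+$, so that $(\nabla_e\omega)\circledast(\nabla^e\omega)=-\tfrac{1}{4}|\nabla\omega|^2\,\omega\circledast\omega$. This yields the exact integral identity
\[
\int s\,W_+(\omega,\omega)\,d\mu=\int\Big[8|W_+|^2-4|W_+(\omega)|^2+2[W_+(\omega,\omega)]^2\Big]\,d\mu,
\]
in which the quantities $W_+(\omega,\omega)$ and $|W_+(\omega)|^2$ (not eigenvalues or determinants) appear. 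A short pointwise computation in a basis adapted to $\omega$ then shows the integrand on the right dominates $4|W_+|^2+\tfrac{3}{2}[W_+(\omega,\omega)]^2$; substituting $W_+(\omega,\omega)=\tfrac{s}{3}+\tfrac{1}{2}|\nabla\omega|^2$ and simplifying gives
\[
\int_M \frac{s^2}{24}\,d\mu-\frac{3}{32}\int_M|\nabla\omega|^4\,d\mu\;\geq\;\int_M|W_+|^2\,d\mu,
\]
from which the inequality and its equality case are immediate. The essential idea you are missing is to contract the $W_+$-Bochner formula against $\omega\otimes\omega$ rather than against the identity.
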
 

\noindent For the proof, see   \S \ref{hummingbird} below.

\section{Curvature and Connected Sums}
\label{worksite}

The constructions in this section will  depend   on the existence of {\sf simply-connected}
minimal complex surfaces $(X^4, J)$  of general type with  $\tau (X) > 0$, 
where  the signature 
$\tau (X) = b_+(X) - b_-(X)$ 
  is understood to be computed  with respect to the  complex orientation of $X^4$. 
Recall \cite{bpv} that a complex surface $(X,J)$  is said to be {\sf minimal}
if it is contains no holomorphically embedded $\CP_1$ of homological self-intersection $-1$, and that $(X^4,J)$ 
is said to be of {\sf general type} if $h^0(X, \mathcal{O} (K^{\otimes j}))$ grows quadratically in 
$j$ for $j\gg 0$,  where $K=\Lambda^{2,0}_X$ denotes the canonical line bundle of $X$. Minimal, simply-connected
complex surfaces of general type exist in abundance; indeed, every smooth complete-intersection surface 
in $\CP_n$ of degree  $\geq 9$ 
 is an example. However,
  constructing such complex surfaces  with  $\tau (X) > 0$ is  surprisingly  difficult  and 
subtle, and no examples were known prior to the trailblazing work of   Miyaoka \cite{miyaokaplus} and Moishezon-Teicher \cite{moteplus}. 
A plethora  of {\sf non-spin} examples   were then   constructed  by Chen \cite{zjchenplus}, after which 
 Persson, Peters, and Xiao \cite{perssonspinplus}  proceeded to show 
 that {\sf spin} examples  exist in similar profusion.
Much more recently, Roulleau and Urz\'{u}a \cite{ruler} settled a celebrated  problem in complex-surface geography
by showing that there  exist sequences of such $X$ with $c_1^2(X)/c_2(X) \to 3$; moreover, one can either do this  while  insisting 
that these $4$-manifolds $X$ be {\sf spin}, or while   instead insisting that that they be {\sf non-spin}. In  terms 
of the {\sf signature} $\tau= (c_1^2 - 2c_2)/3$ and {\sf topological Euler characteristic} $\chi  = c_2$, 
Roulleau and Urz\'{u}a's construction yields  
 sequences of simply-connected  complex surfaces with  $\tau (X) /\chi (X) \to \nicefrac{1}{3}$.

 These complex surfaces $X$ will eventually become essential building blocks in our construction. To make good use of them, however, 
 we will first need to introduce some basic  differential-geometric tricks. 
 
 \begin{lem} 
 \label{fork} 
 Let $\epsilon > 0$ be given. Then, for any smooth 
  compact oriented Riemannian $4$-manifold $(Y,g_0)$, there is a  Riemannian metric $g_\epsilon$
 on $Y$ which is flat on some tiny ball, but which also satisfies 
 \begin{equation}
\label{closer} 
 \frac{1}{4\pi^2} \int_{Y} |W_+|^2 d\mu_{g_\epsilon} <  \frac{1}{4\pi^2} \int_{Y} |W_+|^2 d\mu_{g_0} + \epsilon . 
\end{equation}
Similarly, if  $(Y^4,g_0)$ is a compact Riemannian orbifold with only isolated singularities, there exists
an orbifold metric $g_\epsilon$ on $Y$ which is flat in a small neighborhood of each orbifold singularity, 
but  also satisfies  \eqref{closer}.  
 \end{lem}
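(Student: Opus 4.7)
The plan is to perform an elementary metric surgery: in geodesic normal coordinates centered at a chosen point $p \in Y$, the given metric $g_0$ already differs from the flat model $\delta$ only by terms of order $|x|^2$, so interpolating between $g_0$ and $\delta$ by a smooth radial cutoff on a tiny annulus produces a new metric which is flat on a small ball and whose Weyl curvature differs from that of $g_0$ only in a negligibly small region.

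First I would pick geodesic normal coordinates $(x^1,\ldots,x^4)$ at $p$ on some ball $B_{r_0}(p)$, in which $g_0 = \delta + h$ with $h = O(|x|^2)$, $|\partial h| = O(|x|)$, and $|\partial^2 h| = O(1)$. Then, for each small $r \in (0, r_0)$, I would fix a smooth radial cutoff $\phi_r(|x|)$ that equals $0$ on $B_{r/2}(p)$, equals $1$ outside $B_r(p)$, and whose derivatives obey $|\phi_r'| = O(1/r)$ and $|\phi_r''| = O(1/r^2)$. Setting
\[ g_\epsilon := \delta + \phi_r(|x|)\, h \ \ \text{on } B_r(p), \qquad g_\epsilon := g_0 \ \ \text{on } Y \setminus B_r(p), \]
yields a smooth Riemannian metric on $Y$ (positive-definite for $r$ small, since $h$ is small on $B_r(p)$) which is flat on $B_{r/2}(p)$ and coincides with $g_0$ off $B_r(p)$.

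The heart of the argument is the curvature estimate on the annulus $A_r := B_r(p) \setminus B_{r/2}(p)$. Expanding $\partial^2(\phi_r h)$ by the product rule, each of the resulting summands is uniformly bounded on $A_r$ as $r \to 0$, the delicate term $\phi_r''\cdot h$ being the product of an $O(1/r^2)$ factor and an $O(r^2)$ factor. Consequently the full Riemann tensor of $g_\epsilon$ is $O(1)$ on $A_r$ independently of $r$, and since the obvious volume bound gives $\vol_{g_\epsilon}(A_r) = O(r^4)$, we conclude
\[ \int_{A_r} |W_+|^2\, d\mu_{g_\epsilon} = O(r^4). \]
Since $W_+(g_\epsilon) \equiv 0$ on $B_{r/2}(p)$ and $g_\epsilon = g_0$ off $B_r(p)$, choosing $r$ sufficiently small yields the desired strict inequality \eqref{closer}. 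The main obstacle is really just the careful scaling count for $\partial^2(\phi_r h)$; once that bound is in hand, everything else is soft.

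For the orbifold statement, I would apply this surgery simultaneously at each of the finitely many isolated singularities, working in a local uniformizing chart $\RR^4/\Gamma$ around each singular point. A $\Gamma$-invariant Riemannian metric on a $\Gamma$-neighborhood of $0$ admits $\Gamma$-equivariant normal coordinates at the fixed origin, since its exponential map is $\Gamma$-equivariant; and because both the Euclidean metric $\delta$ and the radial cutoff $\phi_r(|x|)$ are automatically $\Gamma$-invariant, the whole construction descends to a genuine orbifold metric on $Y$ which is isometric to a flat $\RR^4/\Gamma$-model near each singular point, with the annular error estimate above applied once at each singularity.
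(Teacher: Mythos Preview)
Your proof is correct and is essentially identical to the paper's: both flatten $g_0$ by interpolating with the Euclidean model $\delta$ in geodesic normal coordinates via a radial cutoff, use the scaling $\partial^2(\phi_r h)=O(1)$ on the transition annulus to get a uniform curvature bound, and then absorb the error since the annulus has volume $O(r^4)$. Your treatment of the orbifold case via $\Gamma$-equivariance of the exponential map is likewise the same idea, stated a bit more explicitly than in the paper.
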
 
 \begin{proof}
 In geodesic normal coordinates about some point $p\in Y$, $$g_0= \delta + O(\varrho^2),
$$
where $\delta$ is the flat Euclidean metric associated with the coordinate system, and $\varrho$
is the Euclidean radius. Let
$\phi: {\mathbb R}\to {\mathbb R}$ be a non-negative smooth function 
which is identically $0$ on $(-\infty , \frac{1}{2})$ and 
identically $1$ on $(1,\infty ) $, and, for each sufficiently small 
$t> 0$,
set
$$h_t=\delta  + \phi (\frac{\varrho}{t}) [g_0 - \delta] ,$$ 
so that $h_t$  coincides with $g_0$ for  $\varrho > t$, but is flat for $\varrho < t/2$.
We now extend this to a metric on all of $Y$ by setting $h_t = g_0$ outside our coordinate chart. 
In the transition region $\varrho \in (t/2 , t)$, for any small $t$, 
one then has 
$$
 \|h_t-\delta\| \leq C t^{2}, \qquad   \| {\mathbb D}h_t\| \leq C t , \qquad  \| {\mathbb D}^{2}h_t\| \leq C, 
$$
where ${\mathbb D}$  is the  Euclidean derivative operator   associated
with the given coordinate system,  and where  the constant $C$ is
independent
of $t$. In this transition annulus, 
the norm-square $|\mathcal{R}|^2$ of the curvature tensor of $h_t$  is therefore everywhere less than a constant $C^\prime$ 
independent of $t$, 
and the same is therefore true of $|W_+|^2 \leq |\mathcal{R}|^2$. Since 
the volume of this transition annulus is less than a constant times $t^4$, 
 the effect of this modification  on  $\|W_+\|^2_{L^2}$ is less than a constant times
$t^4$. We can therefore achieve our goal by setting $g_\epsilon = h_t$ for some sufficiently small $t$. 

In the orbifold case, the proof is the same, except that, instead of altering the given metric on a single ball, 
 we instead change it  in the above manner
on a  finite number of neighborhoods modeled on $B^4/\Gamma_j$ for 
appropriate  finite
subgroups  $\Gamma_j  \subset \mathbf{SO}(4)$. 
 \end{proof}

 \begin{lem}
 \label{spoon} 
 Let $(Y_1, g_1)$ and $(Y_2, g_2)$ be two compact oriented $4$-manifolds, where each one is conformally flat on 
 some small open set. Then the connected sum $Y_1 \# Y_2$ admits a conformal class of metrics $[g]$ such that 
 \begin{equation}
\label{additive} \int_{Y_1\# Y_2} |W_+|^2 d\mu_{g} =  \int_{Y_1} |W_+|^2 d\mu_{g_1} + \int_{Y_2} |W_+|^2 d\mu_{g_2}.
\end{equation}
 Similarly, if $(Y_1, g_1)$ and  $(Y_2, g_2)$ are oriented orbifolds which contain flat neighborhood
 modeled on $B^4/\Gamma$ and  $\overline{B^4/\Gamma}$, respectively, where the bar is used to  indicate
reversal  of  orientation, then the generalized connected sum $Y_1 \#_{\Gamma} Y_2$ admits
a conformal class of orbifold metrics $[g]$ for which  $\| W_+\|^2_{L^2}$ is exactly additive, in the sense 
 that  a perfect analog of     \eqref{additive} holds. 
\end{lem}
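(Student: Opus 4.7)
My plan is to build a smooth metric on $Y_1 \# Y_2$ whose conformal class is \emph{manifestly} conformally flat on the entire gluing region, so that the $L^2$ Weyl integrand is supported only on the ``outer'' parts coming from $Y_1$ and $Y_2$.  Everything will rest on two facts already heavily used in the paper: the functional $g \mapsto \int |W_+|^2 d\mu_g$ is conformally invariant, and $W_+$ vanishes pointwise on any conformally flat region.  Together these force the additivity to be exact rather than merely approximate.

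First I would replace each $g_i$ by a globally conformal metric $\tilde g_i \in [g_i]$ that is actually flat on some small ball $B_i$ contained in the given conformally flat open set.  In dimension four, $W \equiv 0$ on an open set implies local conformal flatness, so a conformal factor $u$ making $g_i$ flat on a small ball exists; multiplying $u$ by a bump function supported in a slightly larger concentric ball gives a smooth extension to $Y_i$ from which $\tilde g_i = e^{2u} g_i$ emerges.  Choose normal coordinates so that $B_i$ is the unit ball centred at $p_i$ and $\tilde g_i = dr^2 + r^2 d\Omega^2_{S^3}$ on $B_i$.  Using a smooth cutoff, I then manufacture a conformal factor $f_i$ on $Y_i \setminus \{p_i\}$ equal to $1/r^2$ near $p_i$ and to $1$ outside $B_i$.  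The rescaled metric $\hat g_i := f_i \tilde g_i$ lies in $[g_i]$, agrees with $\tilde g_i$ outside $B_i$, and coincides with the cylindrical product $dt^2 + d\Omega^2_{S^3}$ (with $t = \log r$) on the inner portion of $B_i$, producing a half-infinite round neck as $t \to -\infty$.

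For any sufficiently large $T$, I truncate each $(Y_i \setminus \{p_i\}, \hat g_i)$ along the slice $\{t = -T\}$ and glue the two resulting round $S^3$-boundary pieces by an orientation-reversing isometry of the round $S^3$.  The cylindrical metric matches across the seam, producing a smooth closed manifold $M$ diffeomorphic to $Y_1 \# Y_2$ with a smooth metric $g$ restricting to $\hat g_i$ on each side.  By construction, the gluing region $N \subset M$ (the union of the truncated portions of the $B_i$ with the gluing cylinder) is conformally flat, while outside $N$ the metric $g$ equals $\tilde g_i$ on the $i$th side.  Hence
\begin{equation*}
\int_M |W_+|^2 d\mu_g = \sum_{i=1}^{2}\int_{Y_i \setminus B_i} |W_+|^2 d\mu_{\tilde g_i} = \sum_{i=1}^{2}\int_{Y_i} |W_+|^2 d\mu_{\tilde g_i} = \sum_{i=1}^{2}\int_{Y_i} |W_+|^2 d\mu_{g_i},
\end{equation*}
where the middle equality uses the flatness of $\tilde g_i$ on $B_i$ and the last uses the global conformal invariance of the Weyl functional.

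The orbifold case goes through identically, with $S^3$ replaced by $S^3/\Gamma$: the flat orbifold $B^4/\Gamma$ minus its cone point is conformally a half-infinite round cylinder $(-\infty, 0) \times (S^3/\Gamma)$, and the generalized connected sum $Y_1 \#_\Gamma Y_2$ is precisely the result of truncation and identification along a round $S^3/\Gamma$ slice.  The main ``obstacle'', if there is one, is purely organizational rather than analytic: one must arrange the cutoffs and the cylindrical identification so that the result is genuinely smooth and the neck is \emph{literally}---not merely asymptotically---conformally flat.  It is this literal conformal flatness, rather than any curvature estimate, that makes the additivity in \eqref{additive} exact.
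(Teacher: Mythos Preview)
Your proof is correct and follows essentially the same approach as the paper: exploit conformal invariance of $\int |W_+|^2\, d\mu$ together with conformal flatness of the gluing region to obtain exact additivity.  The only cosmetic difference is that the paper phrases the gluing as identifying conformally flat annuli via a conformal inversion, whereas you pass to the equivalent cylindrical model $dt^2 + d\Omega^2_{S^3}$ via the conformal factor $r^{-2}$ and then truncate and match; these are two standard descriptions of the same conformal connected sum construction, and the orbifold variant is handled identically in both.
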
 
\begin{proof} We may delete a tiny round ball from the conformally flat region of each manifold, and  then form
the connected sum by identifying conformally-flat  annular regions near the boundary spheres by an orientation-preserving 
a conformal inversion. Since these
gluing maps have been chosen to preserve the conformal structure, there is then a well-defined  conformal structure induced on the connected sum. 
Since $\| W_+\|^2_{L^2}$ is conformally invariant, the conformally flat balls we have deleted have no impact whatsoever on the
curvature integral, and integrals therefore coincide with the sum of the integrals in the ball-complements. 
This   moreover works equally well in the orbifold case, with only cosmetic changes. 
\end{proof} 

With these lemmata in hand, we can now prove the following: 

\begin{prop}
\label{key}
 Let $(X,J)$ be a minimal complex surface of general type, and let $\epsilon > 0$ be given. 
Then the smooth compact oriented $4$-manifold $X$ admits 
a conformal class $[g_\epsilon ]$  of Riemannian metrics such that
\begin{equation}
\label{bump} 
\frac{1}{4\pi^2} \int_{Y} |W_+|^2 d\mu_{g_\epsilon} < \frac{1}{3} c_1^2 (X) + \epsilon . 
\end{equation}
\end{prop}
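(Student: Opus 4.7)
The plan is to realize the target constant $\tfrac{1}{3}c_1^2(X)$ \emph{exactly} on a singular Kähler--Einstein model, and then perform a careful resolution that pays only $\epsilon$ in Weyl energy. Since $(X,J)$ is minimal of general type, contracting its finite collection of $(-2)$-curves yields the canonical model $X_{\mathrm{can}}$, a normal projective complex surface whose only singularities are rational double points (ADE quotients of $\CC^2$ by finite subgroups of $\mathbf{SU}(2)$), and whose canonical bundle $K_{X_{\mathrm{can}}}$ is ample. The orbifold version of the Aubin--Yau theorem therefore supplies a K\"ahler--Einstein orbifold metric $g_{KE}$ on $X_{\mathrm{can}}$ of negative scalar curvature. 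Moreover, the minimal resolution $\pi\colon X\to X_{\mathrm{can}}$ satisfies $K_X=\pi^\ast K_{X_{\mathrm{can}}}$, so $c_1^2(X)=c_1^2(X_{\mathrm{can}})$.

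For any K\"ahler metric in four dimensions the pointwise identity $|W_+|^2=s^2/24$ holds, and Einstein means $\ro=0$. Combining \eqref{gb} and \eqref{th} with $c_1^2=2\chi+3\tau$ then gives
\[
\frac{1}{4\pi^2}\int_{X_{\mathrm{can}}}|W_+|^2 d\mu_{g_{KE}} \;=\; \frac{1}{3}c_1^2(X).
\]
The remaining task is purely differential-geometric: replace the orbifold $(X_{\mathrm{can}},g_{KE})$ by a smooth metric on $X$ while losing at most $\epsilon$ in $\tfrac{1}{4\pi^2}\int|W_+|^2$.

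First I would apply the orbifold version of Lemma \ref{fork} to deform $g_{KE}$ to an orbifold metric $\tilde g$ which is flat in a small neighborhood of each rational double point, at the cost of less than $\epsilon/2$ in $\tfrac{1}{4\pi^2}\int|W_+|^2$. Next, for each singular point modeled on $\RR^4/\Gamma$, equip the minimal resolution $Y_\Gamma$ with the Kronheimer ALE hyperk\"ahler metric $g_\Gamma^K$, which has $W_+\equiv 0$ and is asymptotic to the flat metric at rate $O(r^{-4})$. A standard cutoff interpolation between $g_\Gamma^K$ and $g_{\mathrm{flat}}$ in an annulus at radius $\sim R$ produces a smooth metric $h_R$ on $Y_\Gamma$ which equals $g_\Gamma^K$ on a large core and is exactly flat outside a compact set. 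A direct derivative count gives $|W_+|^2(h_R)=O(R^{-12})$ in the transition annulus (of volume $O(R^4)$), whence $\int_{Y_\Gamma}|W_+|^2 d\mu_{h_R}=O(R^{-8})$, which can be made smaller than $\epsilon/(2k)$ by choosing $R$ large, where $k$ is the number of singularities. A final application of Lemma \ref{spoon}, matching each flat neighborhood of $(X_{\mathrm{can}},\tilde g)$ with the flat end of a cap $(Y_{\Gamma_j},h_{R_j})$, produces a conformal class on the minimal resolution---which is precisely $X$---satisfying \eqref{bump} by the additivity of $\|W_+\|_{L^2}^2$.

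The main obstacle is the technical truncation in the ALE region: one must verify that the hyperk\"ahler decay rate $O(r^{-4})$ really does make $\int|W_+|^2$ over the transition annulus arbitrarily small, which reduces to a concrete pointwise curvature estimate for the cutoff metric $h_R$. Everything else is essentially formal, given the orbifold Aubin--Yau theorem; a secondary point worth flagging is that the latter indeed applies to $X_{\mathrm{can}}$, since Du Val singularities are canonical (hence klt) and the uniformizing charts by $\mathbf{SU}(2)$-quotients make the orbifold Monge--Amp\`ere theory classical.
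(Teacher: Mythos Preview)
Your proof is correct and follows essentially the same strategy as the paper: pass to the canonical model, use the orbifold K\"ahler--Einstein metric to hit $\tfrac{1}{3}c_1^2(X)$ exactly, flatten near the singularities via Lemma~\ref{fork}, and glue in Kronheimer ALE caps via Lemma~\ref{spoon}. The only difference is that where you truncate each ALE metric to flat at large radius using explicit $O(r^{-4})$ decay estimates, the paper instead takes the one-point \emph{conformal compactification} of the ALE space (still $W_+\equiv 0$ by conformal invariance, now a compact orbifold with a single $\overline{B^4/\Gamma}$ point) and applies Lemma~\ref{fork} there---this sidesteps your ``main obstacle'' entirely and keeps everything inside the two lemmas already proved.
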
 
\begin{proof}
Let $\mathscr{X}$ be the pluricanonical model of $(X,J)$, which is obtained by collapsing all $(-2)$-curves
in $X$. Since $\mathscr{X}$ has only rational double-point singularities, we may choose to view it as a complex orbifold that has 
only A-D-E singularities. This orbifold then has $c_1 < 0$, and the usual  Aubin-Yau  proof \cite{yau}  therefore implies 
 \cite{tsuj}  that it caries an orbifold K\"ahler-Einstein metric $\check{g}$. This K\"ahler-Einstein metric then satisfies
 $$
 \frac{1}{4\pi^2} \int_{\mathscr{X}} |W_+|^2 d\mu_{\check{g}} =  \frac{1}{4\pi^2} \int_{\mathscr{X}} \frac{s^2}{24}  d\mu_{\check{g}}
 = \frac{1}{3} c_1^2( \mathscr{X}) = \frac{1}{3} c_1^2( X),
 $$
 where the last equality reflects the fact that $X\to \mathscr{X}$ is a crepant resolution. If $\mathscr{X}=X$, we are already done. 
 Otherwise, for each singular point $p_j$ with orbifold group $\Gamma_j\subset \mathbf{SU}(2)$, let  $(Y_j, [g_j]) $ be  the one-point conformal compactification
 of one of  Kronheimer's gravitational instanton metrics \cite{kronquot} on the minimal resolution of 
 $\CC^2/\Gamma_j$; this  is a smooth compact orbifold
 with $W_+\equiv 0$ that has  a single, isolated singularity modeled on $\overline{B^4/\Gamma_j}$. 
  Flattening these
 orbifolds slightly at their singular points, in accordance with Lemma \ref{fork}, and then performing generalized connected sums, 
 in accordance with Lemma \ref{spoon}, we then obtain a conformal metric on 
 $X = \mathscr{X}\#_{\Gamma_1} Y_1 \#_{\Gamma_2}  \cdots \#_{\Gamma_k} Y_k$
 with  $\|W_+\|^2_{L^2}$ as close as we like to its value  for the orbifold 
$(\mathscr{X}, [\check{g}])$. 
\end{proof}

This now puts us in a position to prove one of our key results:

\begin{thm}  \label{marchhare}
For any sufficiently large integer $m$, the smooth  compact simply-connected spin $4$-manifold 
$M= m(S^2\!\times\! S^2)$
 admits conformal classes $[h]$
that satisfy inequality \eqref{rabbit}. \end{thm}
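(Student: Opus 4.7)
The plan is to build the desired conformal class on $m(S^2\times S^2)$ from three ingredients: a spin, simply-connected, minimal complex surface $X$ of general type with $\tau(X)>0$; its orientation-reversed copy $\bar X$; and auxiliary copies of $S^2\times S^2$. Such surfaces $X$ exist with arbitrarily large $\chi(X)$ by the Persson-Peters-Xiao construction, and indeed with $\tau(X)/\chi(X)$ arbitrarily close to $\nicefrac{1}{3}$ by the more recent work of Roulleau and Urz\'ua.

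Applying Proposition~\ref{key} to such an $X$ produces, for each $\epsilon>0$, a conformal class on $X$ with $\frac{1}{4\pi^2}\int_X|W_+|^2 <\frac{1}{3}c_1^2(X)+\epsilon$. The \emph{same} Riemannian metric, viewed on $\bar X$, interchanges $W_+$ and $W_-$, so the Thom-Hirzebruch formula \eqref{th} yields
\[
\frac{1}{4\pi^2}\int_{\bar X}|W_+|^2 \;=\; \frac{1}{4\pi^2}\int_X|W_-|^2 \;<\; \frac{c_1^2(X)}{3}-3\tau(X)+\epsilon,
\]
so the orientation-reversal gains us almost $3\tau(X)$ units on the left-hand side. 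Next I equip each auxiliary $S^2\times S^2$ with the round K\"ahler-Einstein product metric, for which $\frac{1}{4\pi^2}\int|W_+|^2 = \nicefrac{8}{3}$; flatten all three metrics near a single point by Lemma~\ref{fork}; and splice the pieces together via Lemma~\ref{spoon}. This produces a conformal class $[h]$ on $X\#\bar X\# k(S^2\times S^2)$ on which $\|W_+\|_{L^2}^2$ is exactly the sum of the three individual contributions.

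Using $c_1^2(X)=2\chi(X)+3\tau(X)$ together with the easy identity $(2\chi+3\tau)(X\#\bar X\# k(S^2\times S^2))=4\chi(X)+4k-4$ and the relation $m = b_2(X)+k$, a direct bookkeeping collapses inequality \eqref{rabbit} to the purely arithmetic condition
\[
k \;<\; \frac{3\tau(X)-4}{4}-O(\epsilon).
\]
By Freedman's theorem, $X\#\bar X\#k(S^2\times S^2)$ is homeomorphic to $m(S^2\times S^2)$, and Wall's stable diffeomorphism theorem upgrades this to an honest diffeomorphism as soon as $k$ exceeds some stabilization constant $k_0(X)$.

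The main obstacle lies in showing that these two constraints on $k$ are jointly satisfiable, and that as $X$ and $k$ vary over admissible choices the resulting integers $m = b_2(X)+k$ exhaust every sufficiently large positive integer. Because the window $[k_0(X),(3\tau(X)-4)/4)$ has width asymptotic to $\chi(X)/4$, while the available values of $b_2(X)$ in the spin Persson-Peters-Xiao geography can be arranged to progress in jumps much smaller than this width, the coverage problem reduces to controlling $k_0(X)$. Ensuring that $k_0(X)$ grows substantially more slowly than $\tau(X)$---which is what one expects for the explicit spin building blocks, and which can be verified via an appeal to Mandelbaum-Moishezon-type almost-complete-decomposability results---is the delicate ingredient on which the argument ultimately hinges.
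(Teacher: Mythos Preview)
Your building blocks, your gluing, and your arithmetic are all correct; the construction does produce a conformal class on $X\#\bar X\#k(S^2\times S^2)$ satisfying \eqref{rabbit} precisely when $k<\frac{3\tau(X)-4}{4}-O(\epsilon)$. The gap is in the coverage step. Because you use only a \emph{single} copy of $X\#\bar X$, the values of $m=b_2(X)+k$ you can hit with a fixed $X$ lie in the interval $[\,b_2(X)+k_0(X),\,b_2(X)+\tfrac{3\tau(X)-4}{4}\,)$, and to cover all large $m$ you must now vary $X$ over an infinite family and control the stabilization number $k_0(X)$ uniformly along that family. You acknowledge this is ``delicate,'' but the appeal to Mandelbaum--Moishezon is misplaced: those almost-complete-decomposability theorems concern \emph{non-spin} surfaces stabilized by $\CP_2$, not spin surfaces stabilized by $S^2\times S^2$, and no comparable uniform result is available for the spin Persson--Peters--Xiao or Roulleau--Urz\'ua examples. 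Your claim that the window has width asymptotic to $\chi(X)/4$ already presupposes $k_0(X)=o(\tau(X))$, which is exactly what needs proof. As written, the argument does not close.

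The paper avoids this difficulty entirely by using \emph{many} copies of a single fixed $X$: it works on
\[
M_{k,\ell}=(k+\ell)\,[X\#\bar X]\;\#\;(2k+\ell)\,(S^2\times S^2),
\]
for which the same bookkeeping gives a gap of size $>(k+\ell)[\tau(X)-7]$, automatically positive since Rokhlin forces $\tau(X)\geq 16$. The crucial point is that Wall's stable-diffeomorphism theorem now enters with a \emph{fixed} stabilization constant $\mathfrak p=\mathfrak p(X)$: once $(k+\ell)\geq\mathfrak p$, one absorbs the $X\#\bar X$ summands one at a time into the growing pool of $S^2\times S^2$'s, and $M_{k,\ell}\approx m(S^2\times S^2)$ with $m=k[b_2(X)+2]+\ell[b_2(X)+1]$. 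Since $b_2(X)+1$ and $b_2(X)+2$ are coprime, Sylvester's solution of the Frobenius coin problem shows that every sufficiently large $m$ arises. Thus the whole coverage issue is handled by elementary number theory with a single $X$, and no control of stabilization constants across a family is needed.
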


\begin{proof} There are \cite{perssonspinplus,ruler} infinitely many simply-connected  compact complex surfaces 
$X$ of general type  with   signature $\tau (X) > 0$ that are {\sf spin} (and so, in particular,  minimal). Choose such a complex surface  $X$, and,
for a small $\epsilon$ we will specify later, 
 equip $X$, per  Lemma \ref{fork} and Proposition \ref{key}, with a   metric $g_\epsilon$ 
that  satisfies \eqref{bump} and is flat on four tiny balls. We also equip the orientation-reversed version $\overline{X}$
of $X$ with  mirror-image versions of this  $g_\epsilon$, and finally equip $S^2 \times S^2$ with  a modification
$\mathbf{g}_\epsilon$ of its  standard product K\"ahler-Einstein metric that  is  flat on a tiny ball and has 
$$
\frac{1}{4\pi^2} \int_{S^2\! \times\! S^2} |W_+|^2 d\mu_{\mathsf{g}_\epsilon} < \frac{c_1^2}{3} (\CP_1\times \CP_1)  + \epsilon = \frac{8}{3} +\epsilon.
$$ 
By removing round, conformally  flat balls and  gluing as in Lemma \ref{spoon}, we thus  obtain a conformal class  $[ h_\epsilon ]$ on 
$$
M_{k,\ell} := (k+\ell ) [ X\# \overline{X}] \# (2k +\ell) [S^2 \times S^2]
$$
that satisfies 
\begin{eqnarray*}
\frac{1}{4\pi^2} \int_{M_{k,\ell}} |W_+|^2 d\mu_{h_\epsilon} &= &   \frac{k+\ell}{4\pi^2} \left[ \int_X |W_+|^2 d\mu_{g_\epsilon} 
 +  \int_{\overline{X}}
 |W_+|^2 d\mu_{g_\epsilon} \right] \\&& \hspace{1.5in} 
+ \frac{2k+\ell}{4\pi^2} \int_{S^2 \times S^2} |W_+|^2 d\mu_{\mathsf{g}_\epsilon} \\
 &=&   \frac{k+\ell}{4\pi^2}\left[  2 \int_X |W_+|^2 d\mu_{g_\epsilon} - 12\pi^2 \tau (X)   \right]  \\&& \hspace{1.5in} 
+ \frac{2k+\ell}{4\pi^2} \int_{S^2 \times S^2} |W_+|^2 d\mu_{\mathsf{g}_\epsilon} \\ \\
  &< &  (k+\ell)  \left[ \frac{2}{3} (2\chi + 3\tau )  (X) - 3\tau (X) + 2\epsilon \right]   + \frac{8}{3}(2k+\ell)  + (2k+\ell) \epsilon \\
  &\leq & (k+\ell) \left[ \frac{4}{3} \chi (X) - \tau (X) + \frac{16}{3} + 4\epsilon \right].
\end{eqnarray*}
By contrast, 
\begin{eqnarray*}
\frac{1}{3} (2\chi + 3\tau ) (M_{k,\ell}) &=& \frac{2}{ 3} \chi (M_{k,\ell}) = \frac{2}{3}~ [ 2+   b_2(M_{k,\ell})] \\
&=& \frac{4}{3}  + \frac{2}{3}(k+\ell) [ b_2 (X) + b_2 (\overline{X})]  + \frac{4}{3} (2k+\ell)  \\
&=& \frac{4}{3}  + \frac{4}{3}(k+\ell) b_2 (X) +  \frac{4}{3} (2k+\ell) \\
&>  &    \frac{4}{3}(k+\ell) [ b_2(X) +1 ] \\
&=& (k+\ell)  \left[ \frac{4}{3} \chi (X) -  \frac{4}{3}\right] . 
\end{eqnarray*}
Taking  $\epsilon \in (0, \frac{1}{12})$, we thus deduce that 
\begin{equation}
\label{walrus} 
\frac{1}{3} (2\chi + 3\tau ) (M_{k,\ell}) - \frac{1}{4\pi^2} \int_{M_{k,\ell}} |W_+|^2 d\mu_{h_\epsilon} > (k+\ell) \left[\tau (X) - 7 \right].
\end{equation}
 But since $X$ is a spin manifold, Rokhlin's Theorem \cite{lawmic,rokhlin}  tells us that $16 |  \tau (X)$,  so  our  $\tau (X) > 0$ hypothesis 
therefore    implies that $\tau (X) \geq 16$. Thus,  the right-hand-side of \eqref{walrus} is  automatically positive, and we have therefore produced   conformal classes $[h]$ 
  on ${M_{k,\ell}}$ that satisfy  \eqref{rabbit}. 
 
 On the other hand, 
 since $X\# \overline{X}$ is a simply connected spin manifold of signature zero, Wall's stable classification \cite{wall} via $h$-cobordism implies that
 there exists some large integer $\mathfrak{p}$ such that 
 $X\# \overline{X} \# (k+\ell) (S^2 \times S^2)$ is diffeomorphic to a connected sum $\mathfrak{q} (S^2 \times S^2)$ for any  $(k+\ell )\geq \mathfrak{p}$. By 
 induction on the number of   $X\# \overline{X}$ summands,   and  then  adding 
 $k$ additional    $(S^2 \times S^2)$         summands, we thus see  that 
  ${M_{k,\ell}} = (k+\ell) ( X\# \overline{X}) \# (2k+\ell) (S^2 \times S^2)$ is   diffeomorphic to $m (S^2 \times S^2)$
  whenever $(k+\ell )\geq \mathfrak{p}$, 
  where  we have set $m := k[b_2(X)+2] + \ell [b_2(X) + 1]$. 
  
On the other hand,  any integer $m \geq b_2(X) [b_2(X)+1]$ can be expressed 
  as $k[b_2(X)+2] + \ell [b_2(X) + 1]$ for some integers $k, \ell \geq 0$;  this elementary fact 
 is  actually a special case of   Sylvester's solution \cite{sylvester} of the Frobenius two-coin problem. But since 
 this  expression for $m$ implies that  $(k+\ell) [b_2(X)+2]\geq m$, we also  have  $(k+\ell )\geq \mathfrak{p}$ whenever 
  $m \geq \chi (X) \mathfrak{p}$. Thus, whenever $m$ exceeds $\chi (X) \max ( \mathfrak{p}(X), \chi (X))$, the connected sum
   $m (S^2 \times S^2)$
can also be expressed as   ${M_{k,\ell}}$ for some $k$ and $\ell$, and  consequently 
  admits   a conformal class 
 $[h]$ that satisfies \eqref{rabbit}.   \end{proof} 
 
 Of course,   by the Gromov-Lawson 
surgery theorem   \cite[Theorem A]{gvln},  
the smooth $4$-manifolds  $m(S^2\!\times\! S^2)$ all admit metrics of positive scalar curvature. 
and Gursky's inequality \eqref{caterpillar} then gives an interesting  
lower bound on the Weyl functional on these Yamabe-positive conformal classes. The point of Theorem \ref{marchhare}, however, 
is that   the infimum of the Weyl functional is usually   considerably lower than 
one might guess without taking a plunge into the Yamabe-negative depths. 

Essentially the  same phenomenon also occurs on non-spin $4$-manifolds: 

\begin{thm} \label{madhatter} Choose any $\varepsilon \in (0, \frac{1}{5})$. Then, for 
 every sufficiently large  integer $m$, and for  any    integer  $n$  satisfying $(\frac{4}{5} + \varepsilon) m < n <  ( 2- \varepsilon) m$, 
the smooth  compact simply-connected non-spin $4$-manifold
 $M= m\CP_2 \# n \overline{\CP}_2$ 
 admits conformal classes $[h]$
that satisfy inequality \eqref{rabbit}. 
\end{thm}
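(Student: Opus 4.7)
The plan is to follow the same scheme as the proof of Theorem~\ref{marchhare}, but with the key modification of allowing the building block to contribute a non-zero signature, so as to access values of $n/m$ other than $1$. First, invoke the construction of Roulleau and Urz\'ua to select a simply-connected non-spin minimal complex surface $X$ of general type with $\tau (X)>0$ arbitrarily large and $\tau (X)/\chi (X)$ as close to $\nicefrac{1}{3}$ as needed. Per Proposition~\ref{key} and Lemma~\ref{fork}, equip $X$ with a metric $g_\epsilon$ that is flat on several small balls and satisfies $\frac{1}{4\pi^2}\int_X |W_+|^2 d\mu_{g_\epsilon} < \frac{c_1^2(X)}{3} + \epsilon$; by the signature formula, the orientation-reversed $\overline{X}$ then obeys the complementary bound
\[ \frac{1}{4\pi^2}\int_{\overline X}|W_+|^2 d\mu = \frac{1}{4\pi^2}\int_X |W_-|^2 d\mu < \frac{2\chi (X) - 6\tau (X)}{3} + \epsilon, \]
which is small whenever $\tau (X)/\chi (X) \approx \nicefrac{1}{3}$. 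Equip $S^2\!\times\! S^2$ with a suitably flattened product K\"ahler-Einstein metric (contributing at most $\nicefrac{8}{3}+\epsilon$ per copy), and equip $\overline{\CP}_2$ with a flattened Fubini-Study metric in the reversed orientation: because Fubini-Study on $\CP_2$ is K\"ahler-Einstein with $W_-\equiv 0$, the reversed orientation gives $W_+\equiv 0$, so each $\overline{\CP}_2$ summand contributes at most $\epsilon$ to the Weyl integral.

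For non-negative integers $a,b,c,d$ with $a+b\geq 1$, Lemma~\ref{spoon} then yields a conformal class $[h_\epsilon]$ on the simply-connected non-spin $4$-manifold
\[ N_{a,b,c,d} := aX\# b\overline{X}\# c(S^2\!\times\! S^2)\# d\overline{\CP}_2 \]
on which the Weyl integral is additive across the summands. Expanding each contribution and collecting, one arrives at the estimate
\[ \tfrac{1}{3}(2\chi+3\tau)(N_{a,b,c,d}) - \tfrac{1}{4\pi^2}\int_{N_{a,b,c,d}}|W_+|^2 d\mu_{h_\epsilon} > \tfrac{4}{3} + b\tau (X) - \tfrac{4(a+b+c)}{3} - \tfrac{d}{3} - (a+b+c+d)\epsilon, \]
which is positive as soon as $b\geq 1$ and $\tau (X)$ is chosen large relative to $a,b,c,d$.

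Next, note that $\tau (N_{a,b,c,d})=(a-b)\tau (X)-d$ and $b_2(N_{a,b,c,d})=(a+b)(\chi (X)-2)+2c+d$, so in the limit $\tau (X)/\chi (X)\to \nicefrac{1}{3}$ the ratio $n/m$ realized by $N_{a,b,c,d}$ approaches $(a+2b)/(2a+b)$ when $c$ and $d$ are negligible, and moves continuously toward $1$ as $c$ increases. In particular the pairs $(a,b)\in\{(2,1),(1,1),(0,1)\}$ give respective limiting baseline values $\nicefrac{4}{5}$, $1$, and $2$, and interpolating via $c$ (with $d$ used as a fine-tuning parameter to hit integer targets and correct signatures) sweeps out the entire open range $n/m\in(\nicefrac{4}{5},\,2)$. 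By Wall's stable diffeomorphism classification of simply-connected smooth $4$-manifolds---and the fact that on any non-spin $4$-manifold, $(S^2\!\times\! S^2)$-stabilization is equivalent to $(\CP_2\#\overline{\CP}_2)$-stabilization---after summing with sufficiently many copies of $(S^2\!\times\! S^2)$, $N_{a,b,c,d}$ becomes diffeomorphic to $m\CP_2\# n\overline{\CP}_2$ for the corresponding $m = (b_2+\tau)/2$ and $n = (b_2-\tau)/2$.

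The main obstacle is the combinatorial step, analogous to the use of Sylvester's solution of the Frobenius two-coin problem in the proof of Theorem~\ref{marchhare}: given $\varepsilon\in(0,\nicefrac{1}{5})$ and a target $(m,n)$ with $m$ sufficiently large and $(\nicefrac{4}{5}+\varepsilon)m < n <(2-\varepsilon)m$, one must produce admissible integer parameters $(a,b,c,d,k)$ (where $k$ denotes the further $(S^2\!\times\! S^2)$-stabilization required by Wall's theorem) and a Roulleau--Urz\'ua surface $X$ such that the stabilized $N_{a,b,c,d}\# k(S^2\!\times\! S^2)$ is diffeomorphic to $m\CP_2\# n\overline{\CP}_2$ while the displayed positive-difference inequality persists. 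The $\varepsilon$-margin in the hypothesis is precisely what absorbs both the approximation error in $\tau (X)/\chi (X)\to\nicefrac{1}{3}$ and the tolerance $\epsilon$ of Lemma~\ref{fork} and Proposition~\ref{key}, while the freedom to make $\chi (X)$ and $\tau (X)$ arbitrarily large within the Roulleau--Urz\'ua family ensures that the positivity survives all the required stabilization and arithmetic bookkeeping.
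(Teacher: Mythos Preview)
Your framework is a reasonable variant of the paper's: instead of using equal numbers of $X$ and $\overline{X}$ as a signature-zero core and then shifting signature with copies of $\CP_2$ or $\overline{\CP}_2$, you allow an imbalance $a\neq b$ in the $X$ versus $\overline{X}$ summands (using only $\overline{\CP}_2$ for fine-tuning). Your displayed gap estimate
\[
\tfrac{1}{3}(2\chi+3\tau)(N_{a,b,c,d}) - \tfrac{1}{4\pi^2}\int_{N_{a,b,c,d}}|W_+|^2 d\mu_{h_\epsilon} > \tfrac{4}{3} + b\,\tau (X) - \tfrac{4(a+b+c)}{3} - \tfrac{d}{3} - (a+b+c+d)\epsilon
\]
is correct, and the baseline ratios $n/m\approx \nicefrac{4}{5},\,1,\,2$ for $(a,b)=(2,1),(1,1),(0,1)$ are computed correctly.

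The genuine gap is the combinatorial step, which you yourself flag as ``the main obstacle'' and then do not actually carry out. Your heuristic ``interpolate via $c$'' does not survive contact with the gap inequality: adding $c$ copies of $S^2\!\times\!S^2$ costs $\tfrac{4c}{3}$ on the right while contributing nothing to the positive $b\,\tau(X)$ term, so for fixed $(a,b)$ the inequality fails once $c$ is comparable to $\tau(X)$. To hit \emph{all} sufficiently large $m$ at a given ratio $n/m$ you must scale $a$ and $b$ as well, and then the range of achievable ratios for which positivity persists must be computed---it does not simply ``follow'' from the freedom in Roulleau--Urz\'{u}a. Nor is it clear from your sketch why the endpoints come out to exactly $\nicefrac{4}{5}$ and $2$; these arise in the paper from an explicit calculation combined with the Miyaoka--Yau bound $\tau/\chi\le\nicefrac{1}{3}$, which you never invoke.

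For comparison, the paper first settles $n=m$ by taking $a=b=k+\ell$, $c=2k+\ell$, obtaining a gap of at least $(k+\ell)[\tau(X)-7]\ge m\,\tfrac{\tau(X)-7}{\chi(X)}$ on $M_{k,\ell}\cong m\CP_2\#m\overline{\CP}_2$. It then attaches $j$ copies of $\overline{\CP}_2$ (cost $\tfrac{j}{3}$) or of $\CP_2$ (net cost $\tfrac{4j}{3}$), yielding the explicit windows $m\le n<\big(1+3\tfrac{\tau(X)-7}{\chi(X)}\big)m$ and $\big(1+\tfrac{3}{4}\tfrac{\tau(X)-7}{\chi(X)}\big)^{-1}m<n\le m$. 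Only at the very end is Roulleau--Urz\'{u}a used to push $\tfrac{\tau(X)-7}{\chi(X)}\to\nicefrac{1}{3}$, turning these into $(2-\varepsilon)$ and $(\nicefrac{4}{5}+\varepsilon)$. Your proposal needs an analogous quantitative argument; without it the proof is incomplete.
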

\begin{proof} By imitating  the proof of Theorem \ref{marchhare}, we first handle the case where $n=m$. Thus,  we  begin by   considering 
$$
M_{k,\ell} = (k+\ell ) [ X\# \overline{X}] \# (2k +\ell) [S^2 \times S^2],
$$
but now take $X$ to be a simply connected {\sf non-spin} minimal  complex surface of general type  with signature  $\tau (X) \geq 8$. 
In this setting,   Wall's stable classification \cite{wall} implies  that there is some $\mathfrak{p}$ such that the simply-connected zero-signature
 non-spin $4$-manifold $M_{k,\ell}$ is diffeomorphic to $m\CP_2 \# m \overline{\CP}_2$ whenever $(k+\ell ) \geq \mathfrak{p}$, where 
 $m := k\chi (X)  + \ell [\chi (X) -1]$.  
 Since any integer  $m \geq [\chi (X)]^2 $ can be expressed as  $k\chi (X)  + \ell [\chi (X) -1]$  for   integers $k , \ell \geq 0$, and since these 
  integers will then satisfy    $(k+\ell ) \geq m /\chi (X)$, it in particular follows that 
  $(k+\ell ) \geq \mathfrak{p}$ whenever $m \geq \chi(X) \mathfrak{p}$. 
  Hence  $m \CP_2 \# m \overline{\CP}_2$ is diffeomorphic to some $M_{k,\ell}$  whenever $m \geq \chi (X) \max (\mathfrak{p}, \chi (X))$, 
   On the other hand, 
 our previous  gluing construction now yields  conformal metrics $[h]$ on $M_{k,\ell}$ which satisfy 
 $$
 \frac{1}{3} (2\chi + 3\tau ) (M_{k,\ell})  - \frac{1}{4\pi^2} \int_{M_{k,\ell}} |W_+|^2 d\mu_{h} > (k+\ell ) \left [\tau (X) - 7\right] \geq m \frac{\tau (X)-7}{\chi (X)} .
 $$ 
 Thus, whenever  $m$ is large,  $m \CP_2 \# m \overline{\CP}_2\approx M_{k,\ell}$ admits  
a  conformal class that  not only satisfies
 \eqref{rabbit}, but for which we actually  have a lower bound for  the gap in terms of $m$ and the homeotype of our  chosen building-block $X$. 
 
We next consider the manifolds 
$$\widehat{M}_{j,k,\ell} = M_{k,\ell} \# j \overline{\CP}_2,$$
and notice that $m \CP_2 \# (m+j)\overline{\CP}_2$ is then  diffeomorphic to 
some such $\widehat{M}_{j,k,\ell}$ whenever $m$ is sufficiently large. But since  the mirror-image 
Fubini-Study metric on $\overline{\CP}_2$ has $W_+\equiv 0$,  Lemma \ref{fork} guarantees that this
reverse-oriented version of $\CP_2$ 
carries conformal classes with $\frac{1}{4\pi^2} \int |W_+|^2 d\mu < \epsilon$ that are conformally flat on some 
tiny ball. Since the constructed conformal classes $[h]$ on $M_{k,\ell}$ also contain tiny conformally-flat
regions, gluing per  Lemma \ref{spoon} thus produces   conformal
classes on $\widehat{M}_{j,k,\ell}$ with 
$$\frac{1}{4\pi^2} \int_{\widehat{M}_{j,k,\ell}} |W_+|^2 d\mu < \frac{1}{4\pi^2} \int_{{M}_{k,\ell}} |W_+|^2 d\mu  + j \epsilon$$
for $\epsilon$ as small as we like. On the other hand, 
$$\frac{1}{3} (2\chi + 3\tau ) (\widehat{M}_{j,k,\ell}) = \frac{1}{3} (2\chi + 3\tau ) (M_{k,\ell}) -\frac{j}{3}$$
so that 
$$
\frac{1}{3} (2\chi + 3\tau ) (\widehat{M}_{j,k,\ell})- \frac{1}{4\pi^2} \int_{\widehat{M}_{j,k,\ell}} |W_+|^2 d\mu > 
m \frac{\tau (X) -7}{\chi (X)} - \frac{j}{3} - j \epsilon
$$
and, by taking $\epsilon$ sufficiently small,  our construction therefore produces  conformal classes $[h]$ on $\widehat{M}_{j,k,\ell}$
satisfying \eqref{rabbit} whenever 
$$0\leq j < 3m \frac{\tau (X) -7}{\chi (X)}.$$
Setting $n= m+j$, our construction 
 therefore  yields conformal classes on $m \CP_2 \# n \overline{\CP}_2$
that satisfy  \eqref{rabbit}, provided that   $m \geq \chi (X) \max (\mathfrak{p}, \chi (X))$ and 
\begin{equation}
\label{southern}
m\leq n <\left(1 + 3 \frac{\tau (X) -7}{\chi (X)}\right) m.
\end{equation}

We can similarly  construct controlled conformal classes  on 
$$
\widecheck{M}_{j,k,\ell} = j \CP_2 \# M_{k,\ell}
$$ 
by instead conformally gluing in  our mild modifications  of the Fubini-Study metric   in a way 
that is compatible with the standard orientation of $\CP_2$. While each copy of $\CP_2$ now contributes 
a substantial additional amount additional  self-dual Weyl curvature, we still have 
$$\frac{1}{4\pi^2} \int_{\widecheck{M}_{j,k,\ell}} |W_+|^2 d\mu < \frac{1}{4\pi^2} \int_{{M}_{k,\ell}} |W_+|^2 d\mu  + 3j + j\epsilon .$$
This is  mitigated by the fact that each added  $\CP_2$  also increases $2\chi + 3\tau$: 
$$\frac{1}{3} (2\chi + 3\tau ) (\widecheck{M}_{j,k,\ell}) = \frac{1}{3} (2\chi + 3\tau ) (M_{k,\ell}) +\frac{5}{3}j. $$
With the  slight change of notation of  now setting 
$n = k \chi (X) + \ell [ \chi (X) -1]$, we  thus have
$$
\frac{1}{3} (2\chi + 3\tau ) (\widecheck{M}_{j,k,\ell})- \frac{1}{4\pi^2} \int_{\widecheck{M}_{j,k,\ell}} |W_+|^2 d\mu > 
n \frac{\tau (X) -7}{\chi (X)} - \frac{4}{3}j  - j \epsilon
$$
so that, for sufficiently small $\epsilon$, our constructed conformal classes satisfy \eqref{rabbit} 
whenever 
$$ 0\leq j < n \frac{3}{4}  \frac{\tau (X) -7}{\chi (X)}.
$$
Setting $m = n+ j$,  we  have thus constructed conformal classes on $\widecheck{M}_{j,k,\ell}$
that satisfy \eqref{rabbit} whenever 
\begin{equation}
\label{northern} 
\frac{m}{1+ \frac{3}{4}  \frac{\tau (X) -7}{\chi (X)}} < n \leq m.
\end{equation}
However, since  $X$  is  of general type,  it 
satisfies \cite{bpv,miyau,yau} the   Miyaoka-Yau inequality  $\chi (X) \geq   3\tau$,  and  
 \eqref{northern}  therefore  implies that   $n > \frac{4}{5} m$.  Whenever inequality  \eqref{northern} holds and $m \geq \frac{5}{4} \chi (X) \max (\mathfrak{p}, \chi (X))$, 
 our previous arguments therefore 
imply that  
$m \CP_2 \# n \overline{\CP}_2$ is  diffeomorphic to  some $\widecheck{M}_{j,k,\ell}$ on which 
 our construction yields conformal  metrics  that satisfy \eqref{rabbit}. 

Up until this point, our discussion has in principle worked for essentially  any non-spin simply connected minimal complex surface  $X$
of positive signature. To optimize
our conclusion, however, we now invoke  the beautiful and surprising theorem  of Roulleau and Urz\'{u}a \cite{ruler}, which asserts 
that there exist sequences of such $X$ such that $c_1^2 (X) /c_2(X) \to 3$. Now notice that  the Euler characteristic
$\chi (X) = c_2(X)$ must tend to infinity for any such  sequence, since the Miyaoka-Yau inequality $c_1^2 \leq 3c_2$ is only saturated  \cite{yau}
by 
ball quotients, which are of course never simply-connected. Consequently, 
$$
\frac{\tau (X) -7}{\chi (X)} = \frac{1}{3} \left[ \frac{c_1^2 (X)}{c_2 (X)} - 2- \frac{21}{c_2 (X)} \right] \to \frac{1}{3}
$$
for any such sequence. Given any $\varepsilon \in (0,\frac{1}{5})$, we can therefore choose such an $X$ such that 
$$
\frac{\tau (X) -7}{\chi (X)} > \frac{1 -\varepsilon}{3} 
$$
and this choice will then satisfy both
$$
1+ 3 \frac{\tau (X) -7}{\chi (X)} > 2- \varepsilon \qquad \mbox{and} \qquad  \frac{1}{1+ \frac{3}{4}  \frac{\tau (X) -7}{\chi (X)}}  < \frac{4}{5} + \varepsilon . 
$$
Thus, if  $m \geq \frac{5}{4}\chi (X) \max (\mathfrak{p} (X), \chi (X))$ and $( \frac{4}{5} + \varepsilon) m < n < (2-\varepsilon) m$, 
either    \eqref{southern} or \eqref{northern} must hold for $m$ and $n$ with  this choice of $X$,  and  $m\CP_2 \# n \overline{\CP}_2$ is consequently  
diffeomorphic to one of the manifolds $\widecheck{X}_{j,k,\ell}$ or $\widehat{X}_{j,k,\ell}$ on which have constructed
a conformal class satisfying inequality  \eqref{rabbit}. 
 \end{proof} 

Together, Theorems  \ref{marchhare} and \ref{madhatter} now imply Theorem \ref{madteaparty}. 

\medskip

Of course, the Gromov-Lawson 
surgery theorem   \cite[Theorem A]{gvln} again implies that the connected sums  $m \CP_2 \# n \overline{\CP}_2$ 
all  admit metrics of positive scalar curvature,  and  Gursky's inequality \eqref{caterpillar} then gives us an interesting 
lower bound for the Weyl functional  of all such metrics. 
On the other hand, Theorem \ref{madhatter}  produces metrics which violate this inequality, thus showing
that the infimum of the Weyl functional is actually unexpectedly  small. This is only  possible
because  the constructed conformal classes $[h]$ all have very    negative Yamabe constants $Y_{[h]}$;  
specifically, these conformal classes $[h]$ must necessarily  satisfy 
 \begin{equation}
\label{hoopla}
Y_{[h]} < - 2\sqrt{6} ~ \| W_{+}\|_{L^2,h}.
\end{equation}
Indeed, let $h$ be a Yamabe metric in $[h]$, and notice that \eqref{rabbit} can be re-expressed as 
$$
\frac{3}{4\pi^2} \int_{M} | W_+|^2 d\mu_{h} <  (2\chi + 3\tau )(M) = \frac{1}{4\pi^2} \int_M \left( \frac{s^2}{24}+  2 |W_+|^2 -
\frac{|\ro |^2}{2} \right) d\mu_h.  
$$
 It  therefore follows that 
 $$
 \int_M  \frac{s^2}{24} d\mu_h > \int_{M} | W_+|^2 d\mu_{h}, 
$$
and  since $h$ has been chosen to be a Yamabe metric, this means that
\begin{equation}
\label{uppsy} 
[Y_{[h]}]^2 = \left( s_h  \vol_h^{1/2} \right)^2 > 24~ \|W_+\|^2_{L^2,h}.
\end{equation}
But since the constructed conformal classes  live on $4$-manifolds with $b_+\neq 0$, 
Gursky's theorem  \cite[Theorem 1]{gursky} assures  us that  \eqref{rabbit}
can only happen for a conformal class $[h]$ with $Y_{[h]} < 0$. Thus, in the present  context, 
\eqref{uppsy} automatically forces inequality  \eqref{hoopla} to hold.

\section{Goldberg Variations}
\label{hummingbird} 

We now wrap up  our exploration   of  the balance between  scalar curvature and self-dual Weyl curvature  by examining   those 
{\sf almost-K\"ahler} $4$-manifolds that have {\sf harmonic self-dual Weyl curvature}, in the sense that 
\begin{equation}
\label{cocoa} 
\delta W_+:= -\nabla \cdot W_+  =0.
\end{equation}
Since  any $4$-dimensional Einstein manifold satisfies \eqref{cocoa} by the second Bianchi identity, 
 this question offers a possible source of insight into the so-called {\em Goldberg conjecture} \cite{goldberg}, 
 which claims  that any compact almost-K\"ahler Einstein manifold should  be K\"ahler-Einstein.


We begin by observing that any $4$-dimensional almost-K\"ahler manifold $(M,g,\omega)$ satisfies 
$$\Lambda^+\otimes \CC = \CC\omega \oplus K \oplus \overline{K},$$
where 
$K= \Lambda^{2,0}_J$ is the canonical line bundle of the almost-complex structure ${J_a}^b= \omega_{ac}g^{bc}$ on $M$.
Locally choosing a unit section $\varphi$ of $K$, we thus have 
$$\nabla \omega = \theta \otimes \varphi + \bar{\theta} \otimes \bar{\varphi}$$
for a unique $1$-form $\theta \in \Lambda^{1,0}_J$, since  $\omega \perp \nabla\omega$,  and $\nabla \wedge \omega =0$. If 
$$\circledast: \Lambda^+\times \Lambda^+\to \odot^2_0\Lambda^+$$
  denotes the symmetric trace-free product, we therefore have 
$$(\nabla_e \omega ) \circledast  (\nabla^e\omega )= 2|\theta |^2 \varphi \circledast\bar{\varphi}  = -\frac{1}{4} |\nabla \omega|^2\omega \circledast \omega$$
and we thus deduce that 
\begin{eqnarray*} 
\langle W_+ , \nabla^*\nabla (\omega\otimes \omega )\rangle
& = &2W_+(\omega , \nabla^*\nabla \omega  ) - 2W_+(\nabla_e \omega , \nabla^e \omega ) \\
& = & 2 W_+(\omega , \nabla^*\nabla \omega  )  + \frac{1}{2}|\nabla \omega |^2 W_+(\omega , \omega )\\
& = & 2 W_+(\omega , 2W_+ ( \omega ) - \frac{s}{3} \omega  ) + 
\Big[ W_+(\omega , \omega ) -\frac{s}{3}\Big] W_+(\omega , \omega )\\
& = & -\frac{2}{3}s W_+ (\omega , \omega ) + 4 |W_+(\omega )|^2 
+ \Big[ W_+(\omega , \omega ) -\frac{s}{3}\Big] W_+(\omega , \omega )\\
& = &  [W_+(\omega , \omega )]^2 + 4 |W_+(\omega )|^2 - s W_+ (\omega , \omega ),
 \end{eqnarray*}
 where we have used the Weitzenb\"ock formula 
 \begin{equation}
\label{tweedledee} 
0= \nabla^* \nabla \omega - 2 W_+(\omega  ) + \frac{s}{3}\omega
\end{equation}
for the harmonic self-dual $2$-form $\omega$, as well as its consequence 
\begin{equation}
\label{tweedledum}
\frac{1}{2} |\nabla \omega |^2 =  W_+(\omega , \omega ) - \frac{s}{3} ~, 
\end{equation}
arising    from the fact that  $|\omega |^2\equiv 2$.
But if $\delta W_+ = 0$, we also have the Weitzenb\"ock formula
$$
0 = \nabla^*\nabla W_+ + \frac{s}{2} W_+ - 6 W_+\circ W_+ + 2 |W_+|^2 I , 
$$
and for  $M$  compact this therefore implies that 
\begin{eqnarray*} 0&=& 
\int_M \Big\langle\Big( \nabla^*\nabla W_+ + \frac{s}{2} W_+ - 6 W_+\circ W_+ + 2 |W_+|^2 I \Big), \omega \otimes
\omega \Big\rangle d\mu \\
&=& \int_M \Big[ \langle W_+ , \nabla^*\nabla (\omega\otimes \omega )\rangle  + \frac{s}{2} W_+(\omega , \omega ) 
 - 6 |W_+(\omega)|^2+ 2 |W_+|^2 |\omega |^2 \Big]  d\mu  \\
&=&  \int_M \Big[ [W_+(\omega , \omega )]^2    - \frac{s}{2} W_+(\omega , \omega ) 
 - 2 |W_+(\omega)|^2+ 4 |W_+|^2 \Big]  d\mu  . 
 \end{eqnarray*}
Hence any compact almost-K\"ahler $(M^4, g, \omega )$  with $\delta W_+=0$ satisfies
\begin{equation}
\label{siren}
\int s W_+(\omega , \omega ) d\mu = \int  \Big[ 8 |W_+|^2     
 - 4 |W_+(\omega)|^2+ 2[W_+(\omega , \omega )]^2 \Big]  d\mu .
\end{equation}
This has an amusing  application to   our question of  balance:

\medskip 

\begin{thm}
\label{hare} 
 If a  compact almost-K\"ahler $4$-manifold  $(M,g, \omega)$ satisfies $\delta W_+=0$, then 
\begin{equation}
\label{melody}
\int_M \frac{s^2}{24}d\mu_g \geq  \int_M |W_+|^2d\mu_g ,
\end{equation}
with equality iff $(M, g ,\omega )$ is a constant-scalar-curvature K\"ahler manifold. 
\end{thm}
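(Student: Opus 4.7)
The plan is to extract the inequality \eqref{melody} from the integral identity \eqref{siren} via an $\omega$-adapted pointwise expansion of $W_+$ followed by a single application of Cauchy--Schwarz. I will set $\psi := W_+(\omega, \omega)$ and work in a local orthonormal frame $\{\hat\omega, e_2, e_3\}$ of $\Lambda^+$, with $\hat\omega := \omega/\sqrt{2}$. Writing $W_+$ as a trace-free symmetric $3\times 3$ matrix in this frame---with diagonal entries $\psi/2, a, b$ (so $a + b = -\psi/2$), entries $\alpha, \beta$ coupling $\hat\omega$ to $\hat\omega^\perp$, and $c$ in the remaining off-diagonal slot---a direct computation will give
\begin{equation*}
|W_+|^2 = \frac{\psi^2}{4} + r + 2q, \qquad |W_+(\omega)|^2 = \frac{\psi^2}{2} + 2q,
\end{equation*}
where $r := a^2 + b^2 + 2c^2$ and $q := \alpha^2 + \beta^2$. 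The trace-free condition $a + b = -\psi/2$ combined with $a^2 + b^2 \geq \frac{1}{2}(a+b)^2$ will then yield the pointwise bound $r \geq \psi^2/8$, with equality iff $a = b = -\psi/4$ and $c = 0$.

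Substituting these identities into \eqref{siren} converts it into
\begin{equation*}
\int_M s\psi \, d\mu = 2\int_M \psi^2 \, d\mu + 8\int_M r \, d\mu + 8\int_M q \, d\mu.
\end{equation*}
Setting $A = \int_M \psi^2 \, d\mu$, $B = \int_M r\, d\mu$, $C = \int_M q\, d\mu$, and $S = \int_M s^2 \, d\mu$, Cauchy--Schwarz gives $(2A + 8B + 8C)^2 \leq SA$, while the target inequality \eqref{melody} is equivalent to $S \geq 24 \int_M|W_+|^2 d\mu = 6A + 24B + 48C$. It therefore suffices to prove the purely algebraic inequality $(2A + 8B + 8C)^2 \geq A(6A + 24B + 48C)$, which upon expansion becomes
\begin{equation*}
64(B+C)^2 + 8AB - 16AC - 2A^2 \geq 0.
\end{equation*}
The integral form $B \geq A/8$ of the pointwise bound $r \geq \psi^2/8$ will close the argument, applied twice: $(B+C)^2 \geq (A/8 + C)^2$ contributes $64(B+C)^2 \geq A^2 + 16AC + 64C^2$, and separately $8AB \geq A^2$ contributes another $A^2$, leaving $64C^2 \geq 0$. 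The degenerate case $A = 0$ is immediate, since the integral identity then forces $B = C = 0$ and hence $W_+ \equiv 0$.

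The step I expect to be tightest, and the main obstacle to verify, is precisely this algebraic inequality: two deployments of $r \geq \psi^2/8$ are needed to absorb both the $-2A^2$ coming from Cauchy--Schwarz and the $-16AC$ cross-term, and it is a priori not obvious that the numerology closes. For the equality case, equality in the chain will force $B = A/8$ and $C = 0$, so pointwise $r = \psi^2/8$ and $q = 0$, meaning $W_+$ must take the K\"ahler-like form $\mathrm{diag}(\psi/2, -\psi/4, -\psi/4)$ in the $\omega$-adapted frame. Equality in Cauchy--Schwarz will force $s = \lambda \psi$ pointwise for some constant $\lambda$, and substituting back determines $\lambda = 3$. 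The Weitzenb\"ock identity \eqref{tweedledum}, $\psi = s/3 + \frac{1}{2}|\nabla\omega|^2$, then forces $\nabla\omega \equiv 0$, so $(M,g,\omega)$ is K\"ahler. Finally, on a K\"ahler $4$-manifold $W_+ = \frac{s}{12}(3\hat\omega \otimes \hat\omega - I)$, and since $\hat\omega$ is parallel, $\delta W_+ = 0$ is equivalent to $ds = 0$; this completes the characterization of equality as constant-scalar-curvature K\"ahler.
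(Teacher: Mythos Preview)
Your argument is correct. Both your proof and the paper's hinge on the same pointwise bound $r \geq \psi^2/8$ (the paper phrases it as $4|W_+|^2 - 4|W_+(\omega)|^2 + 2[W_+(\omega,\omega)]^2 \geq \frac{3}{2}[W_+(\omega,\omega)]^2$, which unwinds to exactly your inequality), but then diverge. The paper never invokes Cauchy--Schwarz on $\int s\psi$; instead it uses the pointwise bound to absorb half of the right side of \eqref{siren}, obtaining $\int s\psi\,d\mu \geq \int\big[4|W_+|^2 + \tfrac{3}{2}\psi^2\big]d\mu$, and then substitutes the Weitzenb\"ock identity $\psi = \tfrac{s}{3} + \tfrac{1}{2}|\nabla\omega|^2$ \emph{directly}. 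This converts $\tfrac{1}{4}\int s\psi - \tfrac{3}{8}\int\psi^2$ into the difference of squares $\tfrac{3}{8}\int\big(\tfrac{s}{3} - \tfrac{1}{2}|\nabla\omega|^2\big)\big(\tfrac{s}{3} + \tfrac{1}{2}|\nabla\omega|^2\big)$ and yields the sharper estimate
\[
\int_M \frac{s^2}{24}\,d\mu \;-\; \frac{3}{32}\int_M |\nabla\omega|^4\,d\mu \;\geq\; \int_M |W_+|^2\,d\mu,
\]
from which \eqref{melody} and the equality case ($\nabla\omega\equiv 0$) are immediate. Your Cauchy--Schwarz route also closes, but at the price of the two-stage algebraic verification and a slightly more indirect equality analysis (recovering $s=3\psi$ from proportionality and then invoking \eqref{tweedledum}); the paper's substitution sidesteps any auxiliary algebraic inequality and displays an explicit $|\nabla\omega|^4$ defect term, which makes the rigidity transparent.
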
 
\begin{proof}
To better understand the meaning  of \eqref{siren}, let us express $W_+$ at an arbitrary   point in an orthonormal basis
$\{ {\mathsf e}_j\}_{j=1}^3$ 
 for 
$\Lambda^+$ in which $\omega = \sqrt{2} {\mathsf e}_1$, and in which $W_+(\omega)$ is orthogonal to ${\mathsf e}_3$. 
Then, in this basis, 
$$W_+ = \left[\begin{array}{ccc}\alpha & \gamma &  \\ \gamma& \beta &  \\ &  &-(\alpha +\beta)\end{array}\right]$$
 for suitable real numbers $\alpha, \beta, \gamma$. In terms of these components, 
$$|W_+|^2 = 2 \alpha^2 + 2\beta^2  + 2\alpha\beta + 2\gamma^2, $$
$$[W_+ (\omega , \omega) ]^2 = 4\alpha^2 ,$$
and 
$$|W_+ (\omega )|^2 = 2\alpha^2 + 2\gamma^2.$$
We therefore have 
\begin{eqnarray*}
4|W_+|^2 - 4|W_+(\omega )|^2 + 2[W_+ (\omega , \omega) ]^2&=&  8\alpha^2+ 8\beta^2  +8\alpha\beta\\
&=&  6\alpha^2 + 8(\frac{\alpha}{2}+ \beta)^2\\
&\geq & 6\alpha^2 = \frac{3}{2} [W_+ (\omega , \omega) ]^2.
\end{eqnarray*}
Thus  \eqref{siren} implies that 
$$
\int s W_+(\omega , \omega ) d\mu \geq  \int  \Big[ 4 |W_+|^2 + \frac{3}{2} [W_+ (\omega , \omega) ]^2
 \Big]  d\mu ,
$$
or in other words that 
$$
\frac{3}{8} \int \left[\frac{2s}{3}  -  W_+(\omega , \omega )\right] W_+(\omega , \omega ) ~d\mu \geq \int |W_+|^2d\mu .
$$
Substituting $W_+(\omega , \omega )= \frac{1}{2} |\nabla \omega|^2 + \frac{s}{3}$ from \eqref{tweedledum}, we thus have 
$$\frac{3}{8} \int \left[\frac{s}{3}  -  \frac{1}{2} |\nabla \omega|^2 \right] \left[\frac{s}{3}  +  \frac{1}{2} |\nabla \omega|^2 \right]  ~d\mu \geq \int |W_+|^2d\mu $$
and algebraic simplification therefore yields 
\begin{equation}
\label{sirensong}
\int_M \frac{s^2}{24} d\mu- \frac{3}{32} \int_M |\nabla \omega|^4 d\mu \geq \int_M   |W_+|^2d\mu.
\end{equation}
Hence  any compact almost-K\"ahler manifold  $(M^4, g , \omega )$ with $\delta W_+=0$ must satisfy  \eqref{melody}, 
with equality iff $(M, g ,\omega )$ is K\"ahler. The claim  therefore 
follows, because   a K\"ahler surface  $(M^4, g , J)$ satisfies
$\delta W_+=0$ if and only if its scalar curvature $s$  is constant. 
\end{proof} 

On the other hand, the balance tips in  the opposite direction  for any almost-K\"ahler manifold of   non-negative
 scalar curvature:

\begin{prop}
\label{tortoise} 
 If  $(M,g, \omega)$ is a   compact almost-K\"ahler $4$-manifold  with scalar curvature $s \geq  0$, then 
$$
\int_M |W_+|^2d\mu_g \geq \int_M \frac{s^2}{24}d\mu_g ,
$$
with equality iff $(M, g ,\omega )$ is a K\"ahler manifold. 
\end{prop}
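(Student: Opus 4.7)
The plan is to reduce the integral inequality to the pointwise bound $s^{2} \leq 24\,|W_{+}|^{2}$ on $M$, from which the claim follows by integration. The key chain of inequalities I would establish is
\[
 \frac{s}{3} \;\leq\; W_{+}(\omega,\omega) \;\leq\; \tfrac{2\sqrt{6}}{3}\,|W_{+}|,
\]
valid pointwise on any almost-K\"ahler $4$-manifold with $|\omega|^{2}\equiv 2$. The lower bound is already free: equation \eqref{tweedledum}, which was obtained just from the Weitzenb\"ock formula for the harmonic self-dual $2$-form $\omega$, rearranges to $W_{+}(\omega,\omega) - s/3 = \tfrac12|\nabla\omega|^{2} \geq 0$, with equality exactly where $\omega$ is parallel.

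For the upper bound, I would diagonalize $W_{+}$ as a symmetric trace-free endomorphism of $\Lambda^{+}$ at each point. Writing $\omega = \sqrt{2}\,u$ for a unit vector $u\in \Lambda^{+}$ and letting $\lambda_{1}\geq\lambda_{2}\geq\lambda_{3}$ be the eigenvalues of $W_{+}$ (so $\lambda_{1}+\lambda_{2}+\lambda_{3}=0$), we have $W_{+}(\omega,\omega) = 2\langle W_{+}u,u\rangle \leq 2\lambda_{1}$. The trace-free constraint gives $\lambda_{1} = -(\lambda_{2}+\lambda_{3})$, whence $\lambda_{1}^{2} = (\lambda_{2}+\lambda_{3})^{2} \leq 2(\lambda_{2}^{2}+\lambda_{3}^{2})$, i.e.\ $3\lambda_{1}^{2}\leq 2|W_{+}|^{2}$, and so $\lambda_{1}\leq \sqrt{2/3}\,|W_{+}|$. (This is the same purely algebraic bound on the top eigenvalue of a trace-free symmetric operator on a $3$-dimensional space that underlies Gursky's inequality.) Combining the two bounds and invoking the hypothesis $s \geq 0$ allows us to square through without loss of sign:
\[
 0 \;\leq\; s \;\leq\; 2\sqrt{6}\,|W_{+}| \quad\Longrightarrow\quad s^{2} \;\leq\; 24\,|W_{+}|^{2}.
\]
Integration against $d\mu_{g}$ then yields the desired inequality.

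For the equality case, integral equality forces pointwise equality a.e., hence in particular $W_{+}(\omega,\omega) = s/3$ a.e. By \eqref{tweedledum} this means $|\nabla \omega|^{2}\equiv 0$, so $\omega$ is parallel and $(M,g,\omega)$ is K\"ahler; conversely, the well-known pointwise identity $|W_{+}|^{2} = s^{2}/24$ on any K\"ahler surface makes equality automatic. The main subtlety, rather than an obstacle, is the role of the sign hypothesis: without $s\geq 0$ one still has $s \leq 2\sqrt{6}\,|W_{+}|$ pointwise, but squaring is no longer legitimate, and indeed the reverse integral inequality of Theorem \ref{hare} shows that the conclusion genuinely fails in the Yamabe-negative regime under the complementary hypothesis $\delta W_{+}=0$.
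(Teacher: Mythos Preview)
Your argument is correct and is essentially the same as the paper's proof. The paper states the pointwise bound $2\sqrt{2/3}\,|W_+|\geq W_+(\omega,\omega)=\tfrac{s}{3}+\tfrac12|\nabla\omega|^2$ as a direct consequence of \eqref{tweedledee}, the trace-free condition on $W_+$, and $|\omega|^2\equiv 2$; you unpack the eigenvalue step in a bit more detail, but the route and the equality analysis are identical.
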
 
\begin{proof} By the Weitzenb\"ock formula \eqref{tweedledee}, we have  
$$2\sqrt{\frac{2}{3}} |W_+| \geq W_+(\omega, \omega ) =  \frac{s}{3} + \frac{1}{2}  |\nabla \omega|^2$$
where the inequality results from  the fact that $W_+$ is trace-free and $|\omega|^2 \equiv 2$. 
Consequently, any almost-K\"ahler $(M^4,g,\omega)$ satisfies 
$$|W_+| \geq \frac{s}{2\sqrt{6}},$$
with equality only   at points where $\nabla\omega =0$. When $s\geq 0$, squaring both sides and integrating thus  yields  the desired result. 
\end{proof}

Theorem \ref{hare} and Proposition \ref{tortoise} now imply Theorem \ref{knight}, along with:

\begin{cor} 
\label{mockturtle}
Any  compact almost-K\"ahler manifold $(M^4, g, \omega)$  with $s \geq 0$   and $\delta W_+ =0$ 
is actually  a constant-scalar-curvature K\"ahler manifold. 
\end{cor}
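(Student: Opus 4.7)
The plan is essentially a one-step deduction: combine Theorem \ref{hare} and Proposition \ref{tortoise} in the obvious way. Under the hypothesis $\delta W_+ = 0$, Theorem \ref{hare} supplies the inequality
$$\int_M \frac{s^2}{24} \, d\mu_g \geq \int_M |W_+|^2 \, d\mu_g,$$
while under the hypothesis $s \geq 0$, Proposition \ref{tortoise} supplies the reverse inequality
$$\int_M |W_+|^2 \, d\mu_g \geq \int_M \frac{s^2}{24} \, d\mu_g.$$
Since both hypotheses hold simultaneously, these two integrals are forced to coincide, so equality holds in both statements.

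The equality clause of either Theorem \ref{hare} or Proposition \ref{tortoise} now finishes the argument. Specifically, equality in Proposition \ref{tortoise} already forces $(M, g, \omega)$ to be K\"ahler, and equality in Theorem \ref{hare} further sharpens the conclusion to \emph{constant-scalar-curvature} K\"ahler; alternatively, once one knows $(M,g,\omega)$ is K\"ahler, the hypothesis $\delta W_+ = 0$ is, as noted at the end of the proof of Theorem \ref{hare}, equivalent to $s$ being constant on a K\"ahler surface. Either route yields the corollary.

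There is no real obstacle here: the two inequalities were tailored so that their equality cases fit together cleanly, and the work is already done. The only thing to double-check is that the two characterizations of the equality case are compatible, but this is immediate since a constant-scalar-curvature K\"ahler surface trivially satisfies both $\delta W_+ = 0$ and is K\"ahler, so nothing is lost in combining the two equality statements.
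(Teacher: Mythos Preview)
Your proposal is correct and matches the paper's own approach exactly: the paper simply states that Theorem \ref{hare} and Proposition \ref{tortoise} together imply the corollary, without writing out any further details. Your spelled-out version is precisely the intended argument.
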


In the special case where  $g$ is an Einstein metric, this gives a different proof of Sekigawa's partial solution   \cite{seki1} of the $4$-dimensional 
Goldberg conjecture. 
For  related results, see \cite{lebcake}.  

\medskip 

By contrast, however,  if we drop the assumption that $s \geq 0$, there are   explicit examples of compact 
almost-K\"ahler $4$-manifolds with $\delta W_+=0$ that are manifestly   {\sf non-K\"ahler}.
In particular, one can construct  \cite{bisleb,inyoungagag} explicit   compact, 
 strictly almost-K\"ahler manifolds that are {anti-self-dual}; and since   these have $W_+\equiv 0$, they obviously satisfy
  $\delta W_+ =0$, too. Because these anti-self-dual examples  have scalar curvature $s= -\frac{3}{2} |\nabla \omega |^2 \leq 0$,
  with strict inequality at most points, 
they inhabit outlands  that lie well beyond the reach of Corollary \ref{mockturtle}. In particular, these examples show that 
one na\"{\i}ve generalization of the Goldberg conjecture is certainly  false.

\medskip 

Finally, we recall that, in the K\"ahler case, the first Chern class is represented by $\frac{1}{2\pi}\rho$,
where $\rho = r(J \cdot , \cdot)$ is the Ricci form. As a consequence, 
\begin{equation}
\label{ricci} 
c_1^2 (M) = \int_M \frac{\rho}{2\pi}  \wedge  \frac{\rho}{2\pi}  = \frac{1}{8\pi^2}  \int_M \left( \frac{s^2}{4}-|\ro|^2\right) d\mu
\end{equation}
for any K\"ahler manifold of real dimension $4$. However, since 
$$
c_1^2 (M) = (2\chi + 3\tau )(M) = \frac{1}{4\pi^2} \int_M \left( \frac{s^2}{24} + 2|W_+|^2 - \frac{|\mathring{r}|^2}{2}   \right) d\mu_g ,
$$
for any Riemannian metric, 
equation  \eqref{ricci} can instead  be explained by the fact that $|W_+|^2 = \frac{s^2}{24}$ in the K\"ahler case. 
This latter way of understanding \eqref{ricci}   has the advantage of making it  clear that generalizations 
of this formula to other contexts must hinge on our familiar question of balance. For example,
in the almost-K\"ahler context,  Proposition  \ref{tortoise} and Theorem \ref{hare}  immediately imply  the following result:

\begin{cor} Let  $(M, g , \omega )$ be a compact almost-K\"ahler $4$-manifold.
\begin{enumerate}[(i)]
\item 
If $g$ has scalar curvature  $s\geq 0$, then 
$$\frac{1}{8\pi^2}  \int_M \left( \frac{s^2}{4}-|\ro|^2\right) d\mu \leq  c_1^2 (M ),  $$
with equality iff $(M, g , \omega )$  is K\"ahler.  
\item If, instead, $g$ satisfies 
 $\delta W_+=0$, then
$$ \frac{1}{8\pi^2} \int_M \left( \frac{s^2}{4}-|\ro|^2\right) d\mu \geq  c_1^2 (M),  $$
again with equality iff $(M, g , \omega )$  is K\"ahler.  
\end{enumerate}
\end{cor}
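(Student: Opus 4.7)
The plan is to rewrite both sides of the desired inequalities in terms of the standard quadratic curvature integrals, so that the corollary reduces cleanly to the comparisons between $\int \tfrac{s^2}{24} d\mu$ and $\int |W_+|^2 d\mu$ already established in Proposition \ref{tortoise} and Theorem \ref{hare}.

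The first step exploits the fact that the almost-complex structure determined by $(g,\omega)$ furnishes $M$ with a well-defined first Chern number, for which the purely topological identity $c_1^2(M) = (2\chi + 3\tau)(M)$ holds. Combining the Gauss-Bonnet formula \eqref{gb} with the Thom-Hirzebruch signature formula \eqref{th}, one then obtains
$$c_1^2(M) \;=\; \frac{1}{4\pi^2} \int_M \left(\frac{s^2}{24} + 2|W_+|^2 - \frac{|\mathring{r}|^2}{2}\right) d\mu_g,$$
valid for \emph{any} Riemannian metric on $M$. Next, a direct algebraic rearrangement (multiplying through to a common denominator of $8\pi^2$) shows
$$c_1^2(M) - \frac{1}{8\pi^2}\int_M\!\left(\frac{s^2}{4}-|\mathring{r}|^2\right)d\mu_g \;=\; \frac{1}{2\pi^2}\int_M\!\left(|W_+|^2 - \frac{s^2}{24}\right) d\mu_g.$$

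Consequently, the inequality $\frac{1}{8\pi^2}\int_M(\tfrac{s^2}{4}-|\mathring{r}|^2)d\mu_g \leq c_1^2(M)$ is equivalent to $\int_M |W_+|^2 d\mu_g \geq \int_M \tfrac{s^2}{24} d\mu_g$, while the reverse inequality is equivalent to the reverse comparison, with matching equality cases in both reductions. Statement (i) therefore follows directly from Proposition \ref{tortoise}, applied under the hypothesis $s\geq 0$; statement (ii) follows directly from Theorem \ref{hare}, applied under the hypothesis $\delta W_+=0$. In both cases, the equality characterization ``iff $(M,g,\omega)$ is K\"ahler'' transfers verbatim from the relevant earlier result.

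Since this is essentially a bookkeeping deduction from results already proved, there is no substantial obstacle. The only care required is in the algebraic rearrangement displayed above and in invoking the standard identity $c_1^2 = 2\chi + 3\tau$ for the almost-complex structure determined by $\omega$; everything else has been done.
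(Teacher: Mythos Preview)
Your proof is correct and follows exactly the paper's approach: the paper derives the identity $c_1^2(M)=(2\chi+3\tau)(M)=\frac{1}{4\pi^2}\int_M\big(\frac{s^2}{24}+2|W_+|^2-\frac{|\mathring{r}|^2}{2}\big)\,d\mu_g$ immediately before the corollary and then states that Proposition~\ref{tortoise} and Theorem~\ref{hare} ``immediately imply'' the result. Your algebraic rearrangement making the reduction explicit is accurate, and the equality cases transfer as you claim.
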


This once again illustrates the degree to which the question of balance consistently plays a natural  role
in understanding  the relationship between curvature and the topology of smooth compact
Riemannian $4$-manifolds.

\section{Unanswered Questions}

On  simply connected compact $4$-manifolds that carry metrics of positive scalar curvature, 
we have just seen that  the infimum of the Weyl functional is often substantially smaller than one
might have guessed on the basis of  Gursky's inequality \eqref{caterpillar}. In particular, Theorem  \ref{marchhare} 
asserts that there exists  an integer $m_0$ such that  $m(S^2\!\times\! S^2)$ carries  a conformal class satisfying \eqref{rabbit} whenever 
$m\geq m_0$. But the method of proof used here does not actually display  a concrete 
$m_0$ with this property; nor  does it even hint at what might happen when $m$ is reasonably  small. 
 Thus, while one  might hope that this  phenomenon  already occurs  when, say,  $m=2$, proving or disproving such
a statement might require an entirely different set of ideas. Moreover, the present method only  gives us 
a crude upper bound for the infimum of the Weyl functional, and does not begin to hint at
its actual value. For example, while  Kuiper's theorem \cite{kuiper} implies   that  $m(S^2\!\times\! S^2)$  cannot admit a 
metric with $W_+\equiv 0$, 
 it doesn't  guarantee that $\inf \int | W_+|^2 d\mu$ of such a manifold could never equal zero. Proving  that 
this infimum is actually positive would  be an interesting  accomplishment in itself!

Our lack of effective estimates for   $m_0$ becomes even more severe in the non-spin setting of 
 Theorem \ref{madhatter}. Given a closed interval 
$I \subset ( \frac{4}{5}, 2)$, we have seen that there  is an integer $m_0$ so that a metric 
satisfying \eqref{rabbit} can be found on $m\CP_2 \# n \overline{\CP}_2$ whenever $m\geq m_0$ and $\frac{n}{m} \in I$. 
However,  the value of $m_0$ produced by the proof is   astronomical in practice, and in any case 
tends to infinity when, for example, the  lower endpoint of $I$ approaches $\frac{4}{5}$. 
Another possible objection 
is that  inequality \eqref{rabbit} depends on a choice of orientation. This, however, is  not really a serious issue, 
because the construction produces metrics that satisfy \eqref{rabbit} for {\sf both} orientations if 
$I\subset (\frac{4}{5}, \frac{5}{4})$. 

Finally, the almost-K\"ahler version of our question of balance has only been touched on  here in a very preliminary way,
and there could  be many interesting things that remain to be discovered  in this setting. 
For example, while we have seen that the direction in which the balance tips is different for 
two interesting classes of 
almost-K\"ahler manifolds, it is possible that  the patterns we have noticed may  hold for larger
classes 
almost-K\"ahler metrics. For example, 
if an almost-K\"ahler metric has positive scalar curvature, 
it then follows that $c_1 \cdot [\omega ] > 0$;  and, conversely,  this condition certainly suffices to imply 
 that $W_+$ has relatively large $L^2$-norm. Is there a version of the almost-K\"ahler  
 balance story  that only depends on the sign of $c_1\cdot [\omega ]$? A clean result  along these lines
 would certainly shed interesting new light on the subject. 

\pagebreak 


\end{document}